\newtheorem{theorem}{Theorem}[section]
\newtheorem{lemma}[theorem]{Lemma}
\newtheorem{proposition}[theorem]{Proposition}
\newtheorem{corollary}[theorem]{Corollary}
\theoremstyle{definition}
\newtheorem{definition}[theorem]{Definition}
\newtheorem{remark}[theorem]{Remark}
\newtheorem*{definition*}{Definition}
\newtheorem*{notat}{Notation}
\newtheorem*{Index Convention}{Index Convention}
\numberwithin{equation}{section}
\begin{document}
\title{Finite groups and Lie rings\\ with an automorphism of order \boldmath  $2^n$}

\author{E. I. Khukhro}
\address{Sobolev Institute of Mathematics, Novosibirsk, 630\,090,
Russia,\newline and University of Lincoln, U.K.} \email{khukhro@yahoo.co.uk}

\author{N. Yu. Makarenko}
\address{Sobolev Institute of Mathematics, Novosibirsk, 630\,090,
Russia}\email{natalia\_makarenko@yahoo.fr} 

\author{P. Shumyatsky}
\address{Department of Mathematics, University of Brasilia, DF~70910-900, Brazil}
\email{pavel@unb.br}

\keywords{Finite group, Lie ring, derived length, Fitting height,
nilpotency class, automorphism}
\subjclass{Primary 20D45; secondary 17B40, 20F40, 20F50}

\begin{abstract}
Suppose that a finite group $G$ admits an automorphism $\varphi$ of
order $2^n$ such that the fixed-point subgroup $C_G(\varphi 
^{2^{n-1}})$ of the involution $\varphi ^{2^{n-1}}$
is nilpotent of
class $c$. Let $m=|C_G(\varphi )|$ be the number of fixed points of
$\varphi$. It is proved that $G$ has a characteristic soluble subgroup of derived
length bounded in terms of  $n,c$ whose index is bounded in terms
of $m,n,c$. A similar result is also proved for Lie rings.
\end{abstract}

\maketitle

\section{Introduction}
\baselineskip15pt Suppose that a finite group $G$ admits an
automorphism $\varphi$.
It follows from the classification
of finite simple groups that if $\varphi$ is fixed-point-free, that is,
$C_G(\varphi )=1$, then $G$ is soluble \cite{row}, and when in
addition $|\varphi |$ is a prime, $G$ is nilpotent by Thompson's
theorem \cite{tho59} (which does not use the classification but
rather lies in its foundation).  Extending the Brauer--Fowler
theorem, using the classification Hartley \cite{har} proved that if $|C_G(\varphi )|=m$, then
$G$ has a soluble subgroup of $(|\varphi |,m)$-bounded index. (Henceforth
we write, say, ``$(a,b,\dots )$-bounded'' to abbreviate
``bounded above in terms of  $a, b,\dots
$ only''.)

Now let $G$ be soluble from the outset; further results were
obtained about the Fitting height (the length of a shortest normal
series with nilpotent factors). When $C_G(\varphi )=1$, by a special
case of Dade's theorem \cite{dad} the Fitting height of $G$ is
bounded in terms of $\alpha (|\varphi |)$ --- the number of prime factors
of $|\varphi |$ counting multiplicities; the coprime case $(|G|,|\varphi 
|)=1$ of this result was proved earlier as a special case of
Thompson's theorem \cite{tho64}. In the general situation, when
$|C_G(\varphi )|=m$,  it is conjectured that $G$ has a subgroup of
$(|\varphi |,m)$-bounded index with Fitting height bounded  in terms of
$\alpha (|\varphi |)$. This conjecture was proved in the coprime case by
Hartley and Isaacs \cite{ha-is} using Turull's results \cite{tu}, and in the case where $|\varphi |$ is a
prime-power by Hartley and Turau \cite{ha-tu}. A weaker  bound for the Fitting height,
in terms of $|\varphi |$ and $m$ was also obtained in the case
where $|\varphi |$ is a product of two prime-powers in an unpublished
note by Hartley \cite{ha-unp}.  (The aforementioned results of  Thompson \cite{tho64}, Hartley--Issacs \cite{ha-is}, and Turull \cite{tu} are actually about any,  not necessarily cyclic, soluble groups of automorphisms, and Dade's theorem \cite{dad} is about any Carter subgroup.)

When there is a bound for the Fitting height, further studies are
naturally reduced to nilpotent groups. It is conjectured that if
$C_G(\varphi )=1$, then the derived length of $G$ is bounded in terms
of $|\varphi |$. So far this is proved only when $|\varphi |$ is a prime due
to Higman \cite{hi} (and Kreknin--Kostrikin \cite{kr,kr-ko} with
an explicit bound), or $|\varphi |=4$ due to Kov\'acs \cite{kov}. In
these two cases even the `almost fixed-point-free' theorems were
proved by  Khukhro \cite{khu90}, and Khukhro and Makarenko
\cite{khu-mak-4}, which give a subgroup of $|\varphi |$-bounded
nilpotency class or derived length with $(|\varphi |,|C_G(\varphi 
)|)$-bounded index.
 Another area where definitive results of this kind were proved is the case where $G$ is a finite $p$-group and $|\varphi |$ is a power of $p$
 (Alperin \cite{alp}, Khukhro \cite{khu85}, Shalev \cite{sha93}, Khukhro \cite{khu93}, Medvedev \cite{med}, Jaikin-Zapirain \cite{jai}).

All these results on nilpotent groups are based on the
corresponding theorems on automorphisms of Lie rings. In
particular, by Kreknin's theorem \cite{kr} a Lie ring $L$ with a fixed-point-free
automorphism of finite order $n$ is soluble of $n$-bounded derived length.
Khukhro and Makarenko \cite{khu-mak04} also proved almost
solubility of a Lie algebra (or a Lie ring, under some additional conditions, which hold, for example, for finite Lie rings)  with an almost
fixed-point-free automorphism $\varphi$ of finite order, with a
`strong'  bound, in terms of $|\varphi |$ only, for the derived length
of a subalgebra (or a subring) of bounded codimension (or index in the additive
group).  But group-theoretic analogues of
these results remain open conjectures, except for the cases where $|\varphi |$ is a prime or 4, as described above.

Therefore it makes sense to obtain results in this direction under
additional conditions. One  such result was obtained by Shumyatsky \cite{shu01}: if a finite group $G$ admits a
fixed-point-free automorphism $\varphi$ of order $2^n$ such that the
fixed-point subgroup $C_G(\varphi ^{2^{n-1}})$ of the involution $\varphi 
^{2^{n-1}}$ is nilpotent of class $c$, then $G$ is soluble of
$(n,c)$-bounded derived length. The purpose of the present paper
is an `almost fixed-point-free' generalization of this result.

\begin{theorem}\label{t1} Suppose that a finite group $G$ admits an
automorphism $\varphi$ of order $2^n$ such that the  fixed-point subgroup $C_G(\varphi ^{2^{n-1}})$ of the involution
$\varphi ^{2^{n-1}}$ is nilpotent of class~$c$. Let $m=|C_G(\varphi  
)|$ be the number of fixed points of $\varphi$. Then $G$ has a characteristic soluble
subgroup of $(m,n,c)$-bounded index that has $(n,c)$-bounded
derived length. \end{theorem}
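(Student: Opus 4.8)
The plan is to deduce the theorem from its Lie-ring counterpart (proved elsewhere in this paper), after a chain of by-now standard reductions, and to deal with the prime~$2$ separately using the hypothesis on the involution $\psi:=\varphi^{2^{n-1}}$.

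\emph{Reduction to a soluble group of bounded Fitting height.} By Hartley's theorem \cite{har}, $G$ has a soluble subgroup of $(m,2^n)$-bounded index; hence the soluble radical $R(G)$, being characteristic, has $(m,n)$-bounded index, since a group with trivial soluble radical that contains a soluble subgroup of index $k$ embeds in $S_k$. A characteristic subgroup is $\varphi$-invariant, and a characteristic subgroup of $R(G)$ is characteristic in $G$, so we may replace $G$ by $R(G)$ and assume $G$ soluble. As $|\varphi|$ is a prime power, the Hartley--Turau theorem \cite{ha-tu} then supplies a subgroup of $(m,2^n)$-bounded index of $n$-bounded Fitting height; intersecting its $\varphi$-conjugates we may take it $\varphi$-invariant, and we pass to it, temporarily giving up characteristicity (to be restored at the end). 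Throughout, on every $\varphi$-invariant section $\bar G$ the order $|C_{\bar G}(\varphi)|$ stays $(m,n)$-bounded and $C_{\bar G}(\psi)$ stays nilpotent of class at most~$c$.

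\emph{The nilpotent case via the associated Lie ring.} Assume first $G$ is nilpotent. Form $L=\bigoplus_i\gamma_i(G)/\gamma_{i+1}(G)$ with the induced automorphism $\bar\varphi$ of order dividing $2^n$; the centralizer $C_L(\bar\varphi)$ is then of $(m,n)$-bounded order and $C_L(\bar\varphi^{2^{n-1}})$ is nilpotent of class at most~$c$. The Lie-ring theorem of this paper produces a characteristic ideal of $L$ of $(m,n,c)$-bounded index (in the additive group) and $(n,c)$-bounded derived length; the corresponding characteristic subgroup of $G$ then has $(m,n,c)$-bounded index and $(n,c)$-bounded derived length, because for nilpotent $G$ an ideal of $L(G)$ of index $k$ lifts to a subgroup of index $k$ whose derived length is controlled by that of the ideal. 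Crucially, working with the Lie \emph{ring} of the whole group $G$ at once—rather than Sylow subgroup by Sylow subgroup—is what keeps the index bounded even when arbitrarily many primes are involved.

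\emph{The Fitting-height induction and making the subgroup characteristic.} For general soluble $G$ we induct on the Fitting height. With $F=F(G)$, the quotient $G/F$ has smaller Fitting height, so by induction it has a characteristic subgroup $\bar Q$ of bounded index and bounded derived length; its preimage $Q$ is characteristic of bounded index in $G$ and satisfies $Q^{(d_1)}\le F$ for some $(n,c)$-bounded $d_1$. Applying the nilpotent case to the characteristic nilpotent subgroup $Q^{(d_1)}$, and using that passing to a quotient does not increase the Fitting height and that a nilpotent-by-(bounded-order) group has a characteristic subgroup of bounded index with bounded derived length, one extracts after routine bookkeeping a characteristic subgroup of $G$ of $(m,n,c)$-bounded index with $(n,c)$-bounded derived length. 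Finally, ``soluble of derived length $\le d$'' is a law given by a multilinear commutator word, so the Makarenko--Khukhro method for finding large characteristic subgroups satisfying such a law upgrades this to a subgroup characteristic in the original group, finishing the proof.

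\emph{Main obstacle.} The genuine difficulty lies in two places. One is the Lie-ring theorem itself, which has to merge Kreknin's solubility theorem \cite{kr} with the ``graded centralizers'' machinery of Khukhro and Makarenko \cite{khu-mak04} in order to pass from finitely many fixed points to an ideal of bounded index. The other, specific to groups, is the prime~$2$: there $\varphi$ does not act coprimely, the associated Lie ring correspondence is delicate, and the only thing that rescues the argument is precisely the hypothesis that $C_G(\psi)$ be nilpotent of class~$c$—without it the corresponding group statement is a well-known open problem, as recalled in the introduction.
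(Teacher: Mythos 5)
Your reduction to the soluble case and the general shape (reduce to nilpotent, pass to the associated Lie ring, invoke the Lie-ring theorem, restore characteristicity via Theorem~\ref{t-char}) matches the paper, but the core step of your nilpotent case rests on a transfer principle that is false. You claim that an ideal of the associated Lie ring $L(G)$ of bounded index and of $(n,c)$-bounded derived length ``lifts'' to a subgroup of $G$ of bounded index whose derived length is controlled by that of the ideal. Nilpotency class passes exactly between a nilpotent group and its associated Lie ring, but derived length does not: the graded ring only records top-weight information, and solubility (or bounded derived length) of a subring or ideal of $L(G)$ gives no control over the derived series of a corresponding subgroup. If such a transfer were available, the open conjectures recalled in the introduction (e.g.\ the group analogue of Kreknin's theorem for fixed-point-free automorphisms of arbitrary finite order) would follow immediately from the known Lie-ring results. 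This is precisely why the paper does not apply Theorem~\ref{t2} directly to $L(G)$: instead it first proves a ``weak'' $(m,n,c)$-bound on the \emph{nilpotency class} of $[G,\psi]$ (Proposition~\ref{p-2.6}, via Lemmas~\ref{l-2.3} and~\ref{l-2.5} in the style of \cite{shu01}), then uses Proposition~\ref{p1} to produce an ideal $B$ of $(n,c)$-bounded class $h$ with $B\cap L^-$ of bounded index, and transfers only top-weight vanishing back to the group via formula \eqref{e-ass}, lowering the class of $[G_i,\psi]$ by at least one at each of boundedly many steps (bounded because of the weak bound) until $[H,\psi]$ has class at most $h$; the derived length of $H$ is then $(n,c)$-bounded because $C_H(\psi)$ has class at most $c$. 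Your proposal is missing both the weak bound and this iterative class-reduction mechanism, and without them the argument does not go through.

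There is a second, related gap in your treatment of the prime $2$ and of non-coprime action. You assert that on every $\varphi$-invariant section the hypotheses persist ($|C(\varphi)|$ bounded, $C(\psi)$ nilpotent of class $c$), and your Fitting-height induction quotients by $F(G)$; but Lemma~\ref{l-fp}(b) requires coprimality, so for sections of even order neither statement is guaranteed: fixed points of $\psi$ in a quotient need not be images of fixed points, and for a $2$-group the fixed points of $\varphi$ on $\bigoplus_i\gamma_i/\gamma_{i+1}$ can accumulate over the factors, so $|C_{L(G)}(\varphi)|$ need not be $(m,n)$-bounded. Contrary to your closing remark, the nilpotency of $C_G(\psi)$ is not what rescues the $2$-part (see Remark~\ref{r1}): the paper bounds $|G/O_{2',2}(G)|$ via the action on the Frattini quotient of $T=O_{2',2}(G)/O_{2'}(G)$ (Lemmas~\ref{l-fp}(a) and~\ref{l-fpp2}) and handles $T$ itself by Khukhro's theorem \cite{khu93} on $p$-automorphisms of finite $p$-groups, reserving the Lie-ring machinery for the Fitting factors of $O_{2'}(G)$, where the action is coprime and Lemma~\ref{l-fp}(b) applies; the bounded-index soluble subgroup $H$ is then assembled as an intersection of centralizers of these sections, with an explicit derived-length count. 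Your ``routine bookkeeping'' hides exactly these points.
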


In fact, the condition in the theorem that $C_G(\varphi ^{2^{n-1}})$
is nilpotent of class  $c$ can be weakened to requiring all Sylow subgroups of
$C_G(\varphi ^{2^{n-1}})$ to be nilpotent of class at most~$c$; see Remark~\ref{r1}.
The standard inverse limit argument yields a consequence for
locally finite groups.

\begin{corollary}\label{c1}
 Suppose that a locally finite group $G$ contains an
element $g$ of order $2^n$ with finite centralizer of order
$m=|C_G(g )|$ such that the centralizer
$C_G(g ^{2^{n-1}})$ of the involution $g ^{2^{n-1}}$ is nilpotent
of class $c$.  Then $G$ has a  characteristic  soluble subgroup of finite
$(m,n,c)$-bounded index that has $(n,c)$-bounded derived length.
 \end{corollary}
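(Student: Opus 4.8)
The plan is to deduce Corollary~\ref{c1} from Theorem~\ref{t1} by approximating $G$ from below by finite subgroups invariant under conjugation by $g$, applying Theorem~\ref{t1} to each, and passing to an inverse limit. Since $G$ is locally finite and $g\in G$, every finite subset of $G$ lies in a finite subgroup $H$ with $g\in H$ and $H^{g}=H$ (take $H=\langle Z^{\langle g\rangle}\rangle$ for $Z$ the given set together with $g$); so $G=\bigcup_{H\in\mathcal H}H$, where $\mathcal H$ is the set of all such $H$, directed by inclusion. For $H\in\mathcal H$, conjugation by $g$ induces an automorphism $\varphi_H$ of $H$ of order $2^{k}$ with $0\le k\le n$. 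If $k\ge1$, then $\varphi_H^{2^{k-1}}$ is conjugation by $g^{2^{k-1}}$, and since $g^{2^{n-1}}=(g^{2^{k-1}})^{2^{n-k}}$ we get $C_H(\varphi_H^{2^{k-1}})=H\cap C_G(g^{2^{k-1}})\le C_G(g^{2^{n-1}})$, which is nilpotent of class at most $c$; also $C_H(\varphi_H)=H\cap C_G(g)$ has order at most $m$. So Theorem~\ref{t1}, applied to $(H,\varphi_H)$ with $n$ replaced by $k$, yields a characteristic subgroup $R(H)\trianglelefteq H$ with $[H:R(H)]\le f$ and $R(H)^{(d)}=1$, where $f=f(m,n,c)$ and $d=d(n,c)$ are obtained from the bounds in Theorem~\ref{t1} by taking maxima over $1\le k\le n$ (using also, in the degenerate case $k=0$, where $H\le C_G(g)$ is finite of order at most $m$ and nilpotent of class at most $c$, that a nilpotent group of class $c$ has $c$-bounded derived length and taking $R(H)=H$).

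Next comes the limit. For $H\in\mathcal H$ set $\mathcal R(H)=\{\,R\trianglelefteq H:[H:R]\le f,\ R^{(d)}=1\,\}$; this is a finite set, nonempty since $R(H)\in\mathcal R(H)$. For $H_1\le H_2$ and $R\in\mathcal R(H_2)$ we have $R\cap H_1\in\mathcal R(H_1)$, because $[H_1:R\cap H_1]\le[H_2:R]\le f$ and $(R\cap H_1)^{(d)}\le R^{(d)}=1$; these restriction maps compose correctly, so $(\mathcal R(H))_{H\in\mathcal H}$ is an inverse system of nonempty finite sets over a directed set and therefore has nonempty inverse limit. Choosing a thread $(R_H)_{H\in\mathcal H}$ with $R_H\in\mathcal R(H)$ and $R_{H_1}=R_{H_2}\cap H_1$ for $H_1\le H_2$, I put $N=\bigcup_{H\in\mathcal H}R_H$. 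Compatibility makes $\{R_H\}$ a directed family of subgroups, so $N$ is a subgroup; each $R_H$ is normal in $H$, so $N\trianglelefteq G$; if $f+1$ elements of $G$ lay in distinct cosets of $N$ they would lie in a common $H$ and there in distinct cosets of $R_H$, contradicting $[H:R_H]\le f$, so $[G:N]\le f$; and any finitely many elements of $N$ lie in a common $R_H$ with $R_H^{(d)}=1$, so $N^{(d)}=1$. Thus $G$ has a normal soluble subgroup $N$ of $(m,n,c)$-bounded index and $(n,c)$-bounded derived length.

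It remains to replace $N$ by a characteristic subgroup. Since $G$ is now virtually soluble, the subgroup $S=S(G)$ generated by all normal soluble subgroups is normal and characteristic; moreover it is soluble of bounded derived length: any normal soluble $A\trianglelefteq G$ satisfies $A\cap N\trianglelefteq G$ with $(A\cap N)^{(d)}=1$ and $|A/(A\cap N)|\le[G:N]\le f$, hence has derived length at most $d+\lceil\log_2 f\rceil$, so the normal soluble subgroups form a directed family of groups of derived length at most $d+\lceil\log_2 f\rceil$ and $S$ has that derived length. As $N\le S$, also $[G:S]\le f$. This already exhibits a characteristic soluble subgroup of $(m,n,c)$-bounded index; pinning the derived length down to the sharp $(n,c)$-bound stated in Corollary~\ref{c1} needs the finer argument described below.

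I expect the one genuinely delicate point to be exactly this last step: passing from a normal finite-index subgroup to a characteristic one is free for the index (the soluble radical does it), but a priori costs about $\log_2 f$ in the derived length, since every natural characteristic construction available is defined without reference to $g$, and in an infinite group there need be no largest normal subgroup of derived length $\le d$. Showing that a characteristic finite-index subgroup of derived length $\le d$ can nevertheless be extracted — exploiting the structural consequences of the hypotheses rather than a black-box appeal to Theorem~\ref{t1} — is the real content of the corollary; everything preceding it, including the check that Theorem~\ref{t1} applies with a $2$-power possibly smaller than $2^{n}$ and the compactness of the inverse limit, is routine.
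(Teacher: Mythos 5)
Your reduction to finite subgroups and the compactness step are correct and are exactly the ``standard inverse limit argument'' the paper has in mind: the directed family of finite $\langle g\rangle$-invariant subgroups $H\ni g$, the check that Theorem~\ref{t1} applies with the $2$-power order of the induced automorphism possibly smaller than $2^n$ (including the degenerate case $H\leqslant C_G(g)$), the finite nonempty sets $\mathcal R(H)$, and the thread producing a normal subgroup $N\leqslant G$ of index at most $f(m,n,c)$ with $N^{(d)}=1$ for $d=d(n,c)$ are all fine.

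The genuine gap is the last step, and your diagnosis of it is wrong. You stop with a characteristic subgroup (the join of the normal soluble subgroups) whose derived length you can only bound by $d+\lceil\log_2 f\rceil$, which depends on $m$, and you assert that extracting a characteristic subgroup of bounded index with derived length bounded in terms of $n,c$ only ``is the real content of the corollary'' and would require some finer argument that you never supply. In fact this step is immediate from Theorem~\ref{t-char}, which is stated in the paper for an \emph{arbitrary} group $G$, with no finiteness hypothesis: solubility of derived length at most $d$ is the law $\varkappa(x_1,\dots,x_{2^d})=1$ for the multilinear commutator $\varkappa$ of weight $2^d$ obtained by nesting (the derived word), and your $N$ is a subgroup of finite index at most $f$ in $G$ satisfying this law. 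Hence $G$ has a characteristic subgroup $C$ of $(f,2^d)$-bounded, i.e.\ $(m,n,c)$-bounded, index satisfying the same law, so $C$ is soluble of derived length at most $d$, which is $(n,c)$-bounded. With that one invocation your argument is complete; as written, the stated conclusion of Corollary~\ref{c1} is not reached, and the soluble-radical detour (which costs the $\log_2 f$ and is why you lose the $(n,c)$-bound) is unnecessary.
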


Here, too,  the condition that $C_G(g ^{2^{n-1}})$
is nilpotent of class  $c$ can be weakened to requiring all nilpotent subgroups of
$C_G(g ^{2^{n-1}})$ to be nilpotent of class at most $c$.

In our recent paper \cite{kms145} we also used Theorem~\ref{t1} to prove that if a 
locally finite group $G$ has a $2$-element $g$ with Chernikov centralizer such that 
the involution in $\langle g\rangle$ has nilpotent centralizer,  then $G$ has a soluble subgroup of finite index.

By the aforementioned results the proof of Theorem~\ref{t1} reduces to the case
of nilpotent groups, where a Lie ring method of `graded
centralizes' developed in \cite{khu90,khu-mak04} is used in
conjunction with ideas of the proof in \cite{shu01}.
We state separately the corresponding Lie ring result, which is used in the
proof of Theorem~\ref{t1}.

\begin{theorem}\label{t2} Suppose that a finite
Lie ring $L$ admits an automorphism $\varphi$ of order $2^n$ such
that the fixed-point subring $C_L(\varphi ^{2^{n-1}})$ of the
involution $\varphi ^{2^{n-1}}$ is nilpotent of class $c$. Let
$m=|C_L(\varphi )|$ be the number of fixed points of $\varphi$. Then $L$ has
ideals $M_1\geqslant M_2$ such that $M_1$ has $(m,n)$-bounded index in
the additive group $L$, the quotient $M_1/M_2$ is nilpotent of class at most $c+1$,
and $M_2$ is nilpotent of  $(n,c)$-bounded class. \end{theorem}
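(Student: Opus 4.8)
The plan is to prove Theorem~\ref{t2} in four stages, combining the Lie-ring method of graded centralizers of \cite{khu90,khu-mak04} with a combinatorial analysis of the $\mathbb{Z}/2^n$-grading induced by $\varphi$, where the extra leverage over Kreknin's theorem comes from the hypothesis that the ``even part'' $C_L(\varphi^{2^{n-1}})$ is nilpotent of class $c$.

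\textbf{Reduction to a primary Lie ring.} Since $L$ is finite, $L=\bigoplus_p L_p$ is the direct sum of its $p$-components, each a $\varphi$-invariant ideal with $|C_{L_p}(\varphi)|\le m$ and $C_{L_p}(\varphi^{2^{n-1}})$ nilpotent of class at most $c$. Producing the required ideals $M_1\ge M_2$ inside each $L_p$ and taking direct sums reduces the problem to the case where $L$ is a $p$-ring, and splits it into the \emph{coprime case} ($p$ odd) and the \emph{modular case} ($p=2$).

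\textbf{Coprime case.} Here $2$ acts invertibly on $L=L_p$, so after adjoining a primitive $2^n$-th root of unity to the ground ring and controlling the resulting additive indices exactly as in \cite{khu-mak04}, one obtains the eigenspace decomposition $L=\bigoplus_{j\in\mathbb{Z}/2^n}L_j$ with $[L_i,L_j]\subseteq L_{i+j}$; the zero component $L_0\cong C_L(\varphi)$ has $m$-bounded order, while the even part $E=\bigoplus_{j\ \mathrm{even}}L_j=C_L(\varphi^{2^{n-1}})$ is nilpotent of class $c$ and has $E_0=L_0$. Applying the graded-centralizer technique of \cite{khu90,khu-mak04} yields a $\varphi$-invariant, grading-homogeneous ideal $M_1\le L$ of $(m,n)$-bounded additive index inside which one may argue as if $L_0=0$ (equivalently, as if $\varphi$ were fixed-point-free). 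By Kreknin's theorem this $M_1$ is already soluble of $n$-bounded derived length, and what remains is to upgrade this to the two-step nilpotent-by-nilpotent structure of the statement.

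\textbf{Crux: the graded combinatorial statement.} The heart of the proof is the assertion that a $\mathbb{Z}/2^n$-graded Lie ring $L$ with $L_0=0$ whose even part $E=\bigoplus_{j\ \mathrm{even}}L_j$ is nilpotent of class $c$ has an ideal $M_2$, nilpotent of $(n,c)$-bounded class, with $L/M_2$ nilpotent of class at most $c+1$. Writing $O=\bigoplus_{j\ \mathrm{odd}}L_j$, one has $[O,O]\subseteq E$, $[O,E]\subseteq O$, and $E$ is itself $\mathbb{Z}/2^{n-1}$-graded with trivial zero component, so the identity $\gamma_{c+1}(E)=0$ must be propagated through $L$. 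I would treat a long iterated commutator of homogeneous elements by ``merging'' consecutive odd-index entries into $E$: a sufficiently long commutator either acquires a long nested $E$-part, hence vanishes by nilpotency of $E$, or retains a long strictly alternating part, hence vanishes by the Kreknin--Kostrikin estimates available because $L_0=0$. Bookkeeping these two mechanisms produces the homogeneous ideal $M_2$ spanned by the ``long'' commutators, nilpotent of $(n,c)$-bounded class, and shows $L/M_2$ is generated modulo a central layer by the image of $E$, forcing nilpotency of class $\le c+1$; the precise value $c+1$ here is the point where the argument follows \cite{shu01}. Obtaining a bound on the class of $M_2$ depending only on $n$ and $c$ — rather than merely on the derived length — is the step I expect to be the main obstacle.

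\textbf{Modular case.} For $p=2$ one cannot diagonalize, but $(\varphi^{2^{n-1}}-1)^2=0$ on $L$, so $L$ differs from $C_L(\varphi^{2^{n-1}})$ only ``quadratically''; combining this with the structure theory of finite $2$-rings admitting an almost fixed-point-free automorphism of $2$-power order (bounded derived length, together with a subring of bounded index of bounded nilpotency class once the pertinent centralizer is nilpotent) and with the nilpotency of $C_L(\varphi^{2^{n-1}})$, one again extracts ideals $M_1\ge M_2$ with the stated properties. Assembling the $p$-components — taking $M_1$ and $M_2$ to be the direct sums over $p$ of the ideals found above — gives ideals of $L$ with $M_1$ of $(m,n)$-bounded index, $M_1/M_2$ nilpotent of class at most $c+1$, and $M_2$ nilpotent of $(n,c)$-bounded class, as required.
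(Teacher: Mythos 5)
There is a genuine gap, and it sits exactly at the step you dismiss in one sentence. In the coprime case you assert that the graded-centralizer technique ``yields a homogeneous ideal $M_1$ of $(m,n)$-bounded additive index inside which one may argue as if $L_0=0$''. No such reduction exists: an ideal of bounded index in $L$ has no reason to have trivial zero component, and one cannot pass to a bounded-index subring on which $\varphi$ acts fixed-point-freely. What the Khukhro--Makarenko theorem (Theorem~\ref{t3}) actually gives is a homogeneous ideal of bounded index that is \emph{soluble} of $n$-bounded derived length, with $L_0$ still present; the whole technical content of \S\,\ref{s-rings} (Propositions~\ref{p1} and~\ref{p2}: representatives, three levels of graded centralizers, Lemmas~\ref{l-freezer}, \ref{l-3}, \ref{l-9}, induction on the derived length of the soluble ideal, and Hall's Theorem~\ref{t-hall}) is devoted precisely to living with a nonzero $L_0$ of order $\leqslant m$. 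Moreover, the output of that work is weaker than what you posit: Proposition~\ref{p1} produces a nilpotent ideal $M$ of $(n,c)$-bounded class whose intersection with $L^-$ has bounded index in $L^-$ only (the even part $L^+$ can be huge), and the class-$(c+1)$ quotient is then obtained by a separate argument with the centralizer $C_{\bar L}(\bar L^-)$ in $\bar L=L/M$, not by factoring out $\langle L^-\rangle$. By contrast, the step you single out as the ``main obstacle'' --- that a $({\Bbb Z}/2^n{\Bbb Z})$-graded ring with $L_0=0$ and nilpotent even part of class $c$ has $\langle L^-\rangle$ nilpotent of $(n,c)$-bounded class --- is exactly Shumyatsky's Theorem~\ref{t-shu}, available as a citation; your effort is aimed at the wrong place.

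The modular case is also essentially missing. Your identity $(\varphi^{2^{n-1}}-1)^2=0$ on $L$ is false: $(\psi-1)^2=-2(\psi-1)$ for an involution $\psi$, so the square vanishes only modulo $2$, and the appeal to an unspecified ``structure theory of finite $2$-rings'' does not yield the metanilpotent conclusion with an $(n,c)$-bounded class for $M_2$. The paper instead works with $M=L_0+L_1+\cdots+L_{2^n-1}\geqslant 2^nL$ (no direct sum is available), applies the identical consequence of Shumyatsky's theorem (Corollary~\ref{cor-shu}) to get $\gamma_{f(n,c)+1}(\gamma_{c+1}(M))\leqslant {}_{\rm id}\langle L_0\rangle$, kills $L_0$ by Lagrange ($mL_0=0$), and bounds the index of $M_1=2^nmL$ via Lemmas~\ref{l-fp} and~\ref{l-fpp2}, taking $M_2=\gamma_{c+1}(M_1)$. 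Your overall skeleton (split off the $2$-part, diagonalize in the coprime part, reassemble) agrees with the paper, but the two pivotal steps --- handling a nonzero $L_0$ in the graded case and the $2$-primary case --- are not carried out, and the first is replaced by a claim that is not true as stated.
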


Theorem~\ref{t2} can be extended to Lie algebras over a field  and
to other classes of Lie rings admitting such an automorphism of
order $2^n$. Here we confine ourselves to the case of finite
Lie rings, since this is sufficient for the
purpose of proving Theorem~\ref{t1}. Even in view of the aforementioned general
Khukhro--Makarenko theorem \cite{khu-mak04}, Theorem~\ref{t2}  still makes sense, since it gives a stronger `metanilpotent' conclusion (of course, under stronger assumptions).

In \S\,\ref{s-prelim} we give definitions, introduce notation, and
list several results that are used in the sequel. In
\S\,\ref{s-rings} we prove Theorem~\ref{t2} on Lie rings using a
modification of the method of graded centralizers developed in
\cite{khu90,khu-mak04} for studying almost fixed-point-free
automorphisms. Theorem~\ref{t1} is proved in
\S\,\ref{s-groups}. Known
results reduce the proof to the case of a nilpotent
group. Then we firstly apply the Lie ring method similarly to \cite{shu01} to obtain a `weak' bound, depending on $m,n,c$, for the nilpotency class of $[G,\varphi ^{2^{n-1}}]$. Finally, Theorem~\ref{t2}, or rather one of the
propositions in its proof, is used to obtain the required `strong' bound, in terms of $n,c$ only, for the nilpotency class of $[H,\varphi ^{2^{n-1}}]$ for a certain subgroup $H$ of $(m,n,c)$-bounded index. When a subgroup of $(m,n,c)$-bounded index and of $(n,c)$-bounded derived length is constructed, we obtain a characteristic subgroup of $(m,n,c)$-bounded index and of the same derived length due to the general result \cite{khu-mak-char} on subgroups of finite index satisfying a multilinear commutator law; see  Theorem~\ref{t-char}.

\section{Preliminaries} \label{s-prelim}

First we recall some definitions and notation. Products in a Lie
ring are called commutators. A
simple commutator $[a_1,a_2,\dots ,a_s]$ of weight (length)
 $s$ is the commutator $[...[[a_1,a_2],a_3],\dots ,a_s]$. The Lie subring and the ideal
generated by a subset~$S$ are denoted by $\langle S\rangle $  and
${}_{{\rm id}}\!\left< S \right>$, respectively. For additive subgroups $U,V$ of a Lie ring, $[U,V]$ denotes the additive subgroup generated by all commutators $[u,v]$, $u\in U$, $v\in V$. Terms of the lower central series of a Lie ring $L$ start from
$\gamma_1(L)=L$, and by induction,  $\gamma_{i+1}(L)=[\gamma_i(L),L]$. A Lie ring
$L$ is nilpotent of class at most~$h$ if $\gamma_{h+1}(L)=0$. Terms of the derived series start from $L=L^{(0)}$, and by induction, $L^{(i+1)}=[L^{(i)},L^{(i)}]$. A Lie ring $L$ is soluble of derived length at most $d$ if $L^{(d)}=0$.

Let $A$ be an additively written abelian group. A Lie ring $L$ is
\textit{$A$-graded} if
$$L=\bigoplus_{a\in A}L_a\qquad \text{ and }\qquad[L_a,L_b]\subseteq L_{a+b},\quad a,b\in A,$$
where the grading components $L_a$ are subgroups of the additive group of~$L$.
Elements of the $L_a$ are called \textit{homogeneous} (with
respect to this grading), and commutators in homogeneous elements
\textit{homogeneous commutators}. A subgroup
 $H$ of the additive group of $L$ is said to be \textit{homogeneous}
if $H=\bigoplus_{a\in A} (H\cap L_a)$; then we set $H_a=H\cap
L_a$. Obviously, any subring or an ideal generated by homogeneous
additive subgroups is
 homogeneous. A homogeneous subring and the
quotient ring by a homogeneous ideal can be regarded as $A$-graded
rings with the induced gradings.

\begin{Index Convention} For a homogeneous element of a  $({\Bbb Z}/n{\Bbb Z})$-graded Lie ring $L$ we use a small letter
with an index that only indicates the grading component to which
this element belongs: $x_i\in L_i$. Thus, different elements can
be denoted by the same symbol, since it will only matter to which
component these elements belong. For example, $x_1$ and
$x_1$ can be different elements of $L_1$, so that $[x_1,\, x_1]$
can be a nonzero element of $L_2$. These indices are considered
modulo~$n$; for example, $a_{-i}\in L_{-i}=L_{n-i}$.
\end{Index Convention}

Note that under the Index Convention a homogeneous commutator
 belongs to the component $L_s$, where  $s$ is the sum modulo $n$ of the indices of all the elements occurring in this commutator.

Suppose that a Lie ring $L$ admits an automorphism
$\varphi $ of order $n$. Let $\omega$ be a
primitive $n$-th root of unity. We extend the ground ring by
$\omega$ and denote by $\widetilde L$ the ring $L\otimes _{{\Bbb
Z} }{\Bbb Z} [\omega ]$. Then $\varphi $ naturally acts on
$\widetilde L$ and, in particular, $C_{\widetilde L}(\varphi  ) =
C_L(\varphi )\otimes  _{{\Bbb Z} }{\Bbb Z} [\omega ]$.

 We define the analogues of eigenspaces $L_k$ for
$k=0,\,1,\,\ldots ,n-1$ as
$$
L_k=\big\{ a\in \widetilde L\mid a^{\varphi}=\omega ^{k}a\big\} .
$$
If $n$ is invertible in the ground ring of $L$ (for example,
when $L$ is finite of order coprime to $n$), then
$$
\widetilde L= L_0 \oplus L_1 \oplus  \dots \oplus L_{n-1}
$$
(see, for example,~\cite[Ch.~10]{hpbl}). This is a $({\Bbb
Z}/n{\Bbb Z})$-grading because
$$
[L_s,\, L_t]\subseteq L_{s+t\,({\rm mod}\,n)}\quad \text{for all}\;\, s,t.$$

\begin{notat}
Whenever we say that $L_0\oplus L_1\oplus \cdots\oplus L_{n-1}$ is a $({\Bbb Z} /n{\Bbb Z} )$-graded Lie ring, we mean that the $L_i$ are the grading components,  so that $[L_s,\, L_t]\subseteq L_{s+t\;({\rm
mod}\;n)}$.
\end{notat}

We now state the `graded' version of the Khukhro--Makarenko theorem \cite{khu-mak04} on Lie rings with an almost fixed-point-free automorphism of finite order.

\begin{theorem}[{\cite[Corollary~2]{khu-mak04}}]\label{t3} Suppose that $L=
L_0\oplus L_1\oplus \cdots\oplus L_{n-1}$ is a $({\Bbb Z} /n{\Bbb
Z} )$-graded Lie ring. If the component $L_0$ is finite of order $m$, then
$L$ has a soluble homogenous
 ideal $M$ of $n$-bounded derived length and of finite $(m,n)$-bounded index in the additive group of~$L$.
\end{theorem}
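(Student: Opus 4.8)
The plan is to deduce the theorem from Kreknin's theorem on graded Lie rings with zero component $L_0=0$, via the method of graded centralizers developed in \cite{khu90,khu-mak04}. Recall Kreknin's theorem \cite{kr}: a $(\mathbb{Z}/n\mathbb{Z})$-graded Lie ring with $L_0=0$ is soluble of derived length at most $k$ for some $n$-bounded number $k=k(n)$. What I would actually use is its combinatorial refinement, extracted from the proof: writing $\delta_j$ for the $j$-th derived (multilinear) commutator, defined by $\delta_0(y)=y$ and $\delta_{j+1}(y_1,\dots,y_{2^{j+1}})=[\delta_j(y_1,\dots,y_{2^j}),\delta_j(y_{2^j+1},\dots,y_{2^{j+1}})]$, there are $n$-bounded numbers $k$, $T$ and possibly an $n$-bounded integer $D$ such that in \emph{every} $(\mathbb{Z}/n\mathbb{Z})$-graded Lie ring the element $D\,\delta_k(a_1,\dots,a_{2^k})$, for homogeneous $a_i$ of nonzero index, is a $\mathbb{Z}$-linear combination of homogeneous commutators in the $a_i$, each of which has an initial segment of weight at most $T$ whose indices sum to $0$ modulo $n$ --- and hence an initial segment lying in $L_0$.

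The point of this is that, up to the finitely many entries coming from $L_0$, the ring $L$ would be soluble of $n$-bounded derived length as soon as one passes to a homogeneous subring in which all \emph{obstructive} subcommutators --- simple commutators of weight at most $T$ in nonzero-index homogeneous elements, with index sum $0$ modulo $n$ --- vanish. Here the hypothesis is used essentially: obstructive subcommutators take their values in the \emph{finite} subring $L_0$, so for any homogeneous subring $R$ there are at most $|L_0|=m$ possible values of obstructive subcommutators with entries in $R$.

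The core of the argument is the graded-centralizer construction. I would build a descending chain of homogeneous subrings $L=R_0\supseteq R_1\supseteq\cdots\supseteq R_s=K$ of $(m,n)$-bounded length, where $R_{i+1}$ is obtained from $R_i$ by intersecting with the centralizers in $R_i$ of the at most $m$ values of the weight-$(i+1)$ obstructive subcommutators with entries in $R_i$, arranged level by level so that in the final subring $K$ every obstructive subcommutator with entries in $K$ annihilates $K$. Then $D\,\delta_k$ vanishes on $K$ modulo ${}_{\rm id}\langle L_0\cap K\rangle$, and, using finiteness of $L_0$ once more, $K$ --- or a further subring of $(m,n)$-bounded index, a routine adjustment absorbing $D$ and the $L_0$-entries --- is soluble of derived length bounded in terms of $k$, hence $n$-bounded, while $|L:K|$ is $(m,n)$-bounded by construction.

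The genuinely delicate point, which I expect to be the main obstacle, is keeping the index of each step $(m,n)$-bounded: a priori the centralizer in $L$ of an element $z\in L_0$ can have unbounded, even infinite, index, since $\operatorname{ad}z$ may act with infinite-rank image on a component $L_j$ with $j\neq 0$. This is precisely what the bookkeeping of the method of graded centralizers is designed to overcome --- the construction must be set up recursively so that at level $i+1$ the element against which one centralizes, being itself a commutator of bounded weight in elements of $R_i$, already acts on $R_i$ through a map whose kernel has $(m,n)$-bounded index. Finally, since the theorem asks for an \emph{ideal}, one passes from the homogeneous subring $K$ to a homogeneous ideal $M\subseteq K$ of $(m,n)$-bounded index --- still soluble of $n$-bounded derived length, being a subring of $K$ --- by the argument of \cite{khu-mak04}, which exploits that $K$ is not an arbitrary subring but an intersection of centralizers of elements of the finite subring $L_0$.
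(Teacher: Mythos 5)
In the paper itself Theorem~\ref{t3} is not proved at all: it is quoted as \cite[Corollary~2]{khu-mak04}, so the only meaningful comparison is with the method of that cited work, which your outline is indeed trying to reproduce (combinatorial Kreknin plus graded centralizers). As a proof, however, the outline has a genuine gap at precisely the point you yourself flag as ``the genuinely delicate point''. Passing from $R_i$ to $R_{i+1}$ by intersecting with the centralizers of the at most $m$ values in $L_0$ of obstructive subcommutators does not give $(m,n)$-bounded index: for $z\in L_0$ the map $\mathrm{ad}\,z$ sends each component $L_j$ into itself, not into $L_0$, so $|L_j:C_{L_j}(z)|$ is not controlled by $m$ in any way; and the remedy you gesture at --- that an element which is a bounded-weight commutator in elements of $R_i$ ``already acts on $R_i$ through a map whose kernel has $(m,n)$-bounded index'' --- is simply not true in general and is nowhere substantiated. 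The actual mechanism of the method of graded centralizers is different: one never centralizes elements of $L_0$. Instead one takes kernels of the homomorphisms $\vartheta_{\vec z}\colon y_j\mapsto [y_j,z_{i_1},\dots,z_{i_k}]$ whose \emph{codomain} is $L_0$, the nonzero indices being chosen so that $j+i_1+\cdots+i_k\equiv 0\pmod n$; each such kernel automatically has index at most $m=|L_0|$ in $L_j$. To keep the number of kernels bounded one intersects only over $(m,n)$-boundedly many tuples $\vec z$ of \emph{fixed} representatives (representatives of pattern--value pairs with values in the finite component, and coset representatives of the centralizers of the previous level), built recursively through an $n$-bounded number of levels tied to Kreknin's derived length $k(n)$.

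This fixing of representatives, the resulting centralizer identities, and the ``freezing''/rearrangement argument showing that $\delta_{k}$ evaluated on top-level centralizers decomposes into pieces annihilated by those identities are the heart of the proof of \cite[Corollary~2]{khu-mak04}, and they are exactly what is missing from your sketch; a small-scale model of this machinery (kernels of the maps $\vartheta_{\vec z}$, representatives, coset representatives, levels $1$--$3$) can be seen in \S\,\ref{s-rings} of the present paper in the proof of Proposition~\ref{p2}, and you may compare your step with Definition~\ref{d-th} there, where the boundedness of the index comes from the codomain being $L_0$, not from the number of elements being centralized. Two smaller points: Kreknin's theorem needs no integer multiplier $D$ (the inclusion $\langle L_1,\dots,L_{n-1}\rangle^{(k(n))}\subseteq{}_{\rm id}\langle L_0\rangle$ is exact), and the final passage from a soluble homogeneous subring of bounded index to a homogeneous \emph{ideal} of bounded index is again only cited in your text rather than argued, so as written the proposal is an accurate road map of the cited proof but not a proof.
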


We now introduce specialized notation for our case of an automorphism of order $2^n$. Let $L= L_0 \oplus L_1\oplus  \dots \oplus L_{2^n-1} $ be a $({\Bbb Z}/2^n{\Bbb Z})$-graded Lie ring.

\begin{notat}
Let $L_{\rm odd}$ denote the set of all `odd' grading
components $L_j$ with odd $j$,  and let $L^-$ be their sum.
Similarly, let $L_{\rm even }$ denote the set of all `even'
components $L_i$ with even $i$, and let $L^+$ be their sum.
 We also abuse this notation by letting $L_{\rm odd}$ and  $L_{\rm even }$ denote the unions of the corresponding
components.  We use similar notation for any homogeneous additive subgroup $X$ and its components.
\end{notat}

 For $L_i,L_j\in L_{\rm odd}$ and $L_k,L_l\in L_{\rm even}$ we clearly have $[L_i,L_j]\in L_{\rm even}$,  $[L_i,L_k]\in L_{\rm odd}$, and $[L_k,L_l]\in L_{\rm even}$.  Therefore $L^+$ is a subring of $L$, while $L^-$ is not. Note also that the subring generated by  $L_{\rm odd}$ is an ideal of $L$.

The next theorem is essentially a reformulation of  Shumyatsky's theorem for Lie rings
\cite{shu01}.

\begin{theorem}[{\cite[Proposition~2.6]{shu01}}]\label{t-shu} Suppose that  $L= L_0 \oplus L_1 \oplus \dots
\oplus L_{2^n-1}$ is a $({\Bbb Z}/2^n{\Bbb Z})$-graded Lie ring such that  the subring $L^+$ is nilpotent of class
$c$ and $L_0=0$. Then the subring generated by $L^-$ is nilpotent
of $(n,c)$-bounded nilpotency class $f(n,c)$.
\end{theorem}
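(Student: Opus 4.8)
The plan is to combine the nilpotency of $L^+$ with the grading structure to control commutators among the odd components, reducing everything to a Lie-ring identity in the spirit of the proof in \cite{shu01}. First I would pass to the extended ring $\widetilde L$ and work with the $({\Bbb Z}/2^n{\Bbb Z})$-grading directly; since $L_0=0$, the subring $R=\langle L^-\rangle$ generated by the odd components is an ideal of $L$, and it decomposes as $R=R^+\oplus R^-$ where $R^+=R\cap L^+$ and $R^-=L^-$. Because every commutator of two odd-indexed elements lands in $L^+$, the subring $R^+$ is contained in $L^+$, hence nilpotent of class at most $c$. Thus $R$ is generated by the subgroup $L^-$ together with the ideal $R^+$, and $R/R^+$ is abelian (it is spanned by the images of the $L_j$ with $j$ odd, and any bracket of two such lands in $R^+$). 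So $R$ is already metabelian-by-(class $c$) in a crude sense; the real work is to bound the nilpotency class of $R$ itself, not merely its solubility.

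The key mechanism is the following: take a long simple commutator $[x_{i_1},x_{i_2},\dots,x_{i_N}]$ in homogeneous odd elements. Grouping the entries in consecutive pairs, $[x_{i_1},x_{i_2}]$, $[x_{i_3},x_{i_4}]$, etc., each pair is a homogeneous element of $L^+$, and by the Jacobi identity and linearity one rewrites the long commutator as a ${\Bbb Z}[\omega]$-linear combination of commutators each of which, after regrouping, is built from at least $\lfloor N/2\rfloor - O(c)$ elements of $L^+$ — so once $N$ is large enough (roughly $N > 2(c + 1) + (\text{correction})$) such commutators vanish because $L^+$ is nilpotent of class $c$. The subtlety is that the entries of these $L^+$-commutators are not arbitrary: they are brackets of \emph{odd} components, and when one regroups one must keep track of which sums of indices modulo $2^n$ arise, to ensure one genuinely produces an iterated commutator inside $L^+$ of weight exceeding $c$ rather than an element that has "escaped" back into $L^-$. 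This is exactly where the hypothesis $L_0 = 0$ is used: it prevents degenerate cancellations and lets one run the induction on $n$, since passing to $L^{2^{n-1}}$-type subrings (the fixed points of the "square" of the grading shift) reduces $2^n$ to $2^{n-1}$.

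Concretely I would argue by induction on $n$. The base case $n=1$ is trivial: $L^-=L_1$ and $[L_1,L_1]\subseteq L_0=0$, so $\langle L^-\rangle$ is abelian. For the inductive step, consider the ${\Bbb Z}/2^{n-1}{\Bbb Z}$-grading obtained by "folding" (i.e. the automorphism $\varphi^2$, whose eigenspace decomposition merges $L_i$ and $L_{i+2^{n-1}}$); applying the inductive hypothesis to an appropriate homogeneous subring handles the "deep" part of the odd subring, and a direct pairing argument as above handles the top layer, yielding a bound of the form $f(n,c) = g(c)\cdot f(n-1, c')$ for suitable $c'$ depending on $n,c$. The main obstacle I anticipate is bookkeeping the index arithmetic modulo $2^n$ in the regrouping step — showing that after pairing odd entries one really lands inside a genuinely long iterated commutator of the nilpotent ring $L^+$, uniformly, without the pairing scheme collapsing when many indices coincide. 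Managing this cleanly will likely require introducing an auxiliary multilinear identity (a "collection" argument) and invoking the fact that $L^+$ nilpotent of class $c$ forces \emph{every} arrangement of $c+1$ of its elements to bracket to zero, so that no matter how the odd indices distribute, a sufficiently long commutator is killed.
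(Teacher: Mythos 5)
There is a genuine gap --- in fact two. First, note that the paper does not prove Theorem~\ref{t-shu} at all: it is quoted from \cite[Proposition~2.6]{shu01}, so your proposal must be judged against Shumyatsky's argument, whose shape is visible in Lemmas~\ref{l-2.3}, \ref{l-2.5} and Proposition~\ref{p-2.6} of this paper. Your central mechanism --- pairing consecutive odd entries of a long left-normed commutator so as to manufacture a subcommutator of weight greater than $c$ in elements of $L^+$ --- breaks down exactly in the case you flag but do not resolve, namely when odd entries repeat. For a commutator $[x_i,y_j,y_j,\dots ,y_j]$ with $x_i\in L_i$, $y_j\in L_j$ and $i,j$ odd, the pairs $[y_j,y_j]$ your regrouping would like to form may be brackets of one and the same element and hence zero, and even for distinct elements there is no reason the Jacobi rewriting assembles a genuinely long commutator inside $L^+$ rather than alternating back into $L^-$; so nilpotency of $L^+$ alone does not kill such commutators. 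What does kill them is $L_0=0$ together with the invertibility of odd residues modulo $2^n$: some initial segment $[x_i,y_j,\dots ,y_j]$ has index $i+kj\equiv 0\;({\rm mod}\;2^n)$ and lies in $L_0=0$. In general the entries cannot be freely permuted, so this zero-sum-segment device only becomes available after a reduction to the metabelian case; that is why the actual proof first uses Kreknin's theorem \cite{kr} (via $L_0=0$) to get bounded derived length, inducts on the derived length, treats the metabelian term by the rearrangement lemma \cite[Lemma~2.2]{shu01} (any $2^n-1$ odd numbers can be reordered so that some initial segment has any prescribed sum modulo $2^n$), and reassembles the bounds with Hall's Theorem~\ref{t-hall} --- precisely the pattern of Lemmas~\ref{l-2.3}, \ref{l-2.5} and Proposition~\ref{p-2.6} above. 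None of these ingredients appears in your sketch.

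Second, your induction on $n$ by `folding' the grading to ${\Bbb Z}/2^{n-1}{\Bbb Z}$ cannot start: in the folded grading the zero component is $L_0+L_{2^{n-1}}=L_{2^{n-1}}$, which need not vanish, so the hypothesis $L_0=0$ of the inductive statement is lost (while the even and odd parts are unchanged, since $2^{n-1}$ is even for $n\geqslant 2$, so nothing is gained). No `appropriate homogeneous subring' to which the inductive hypothesis applies is identified, and the claimed recursion $f(n,c)=g(c)\cdot f(n-1,c')$ is not established. The base case $n=1$ is correct, but the induction in the known proof runs on the derived length, not on $n$.
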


Since $\langle L^-\rangle ={}_{\rm id}\langle L^-\rangle$,  under the hypotheses of Theorem~\ref{t-shu} the Lie ring $L$ satisfies
$$
\gamma_{f(n,c)+1}(\gamma_{c+1}(L))=0.
$$

For dealing with a Lie ring whose additive group
is a finite $2$-group we  need the following `combinatorial' corollary  of Theorem~\ref{t-shu}. Slightly abusing   notation, we use the same symbols $\gamma _i$ (as those denoting terms of the lower central series) to denote Lie polynomials that are simple multilinear commutators and  write
\begin{align*}
(\gamma_{i}\circ \gamma_{j})(x_1,x_2,\ldots, x_{ij})&=\gamma_{i} \big(\gamma_{j}(x_1, \ldots, x_{j}),\dots , \gamma_{j} (x_{(i-1)j+1},\ldots,
x_{ij})\big)\\ & =\big[[x_1, x_2,\ldots, x_{j}],
[x_{j+1},\ldots, x_{2j}],\ldots, [x_{(i-1)j+1},\ldots,
x_{ij}]\big].
\end{align*}

\begin{corollary}\label{cor-shu}
Let $n, c$ be positive integers, and $f=f(n,c)$ the value of the function given by
Theorem~$\ref{t-shu}$. For $r=(c+1)(f+1)$, the following holds. If
we arbitrarily and formally assign lower indices $i_1, i_2, \ldots
, i_r$ to elements $y_{i_1}, y_{i_2}, \dots, y_{i_r}$ of an
arbitrary Lie ring, then the commutator
$(\gamma_{f+1}\circ \gamma_{c+1})(y_{i_1},y_{i_2},\ldots,
y_{i_r})$ can be represented as a linear combination of
commutators in the same elements $y_{i_1}, y_{i_2}, \dots,
y_{i_r}$ each of which contains either a subcommutator with zero
modulo $2^n$ sum of indices or a subcommutator of weight $c+1$ of the form $[g
_{2u_1}, g_{2u_2},\dots ,g_{2u_{c+1}} \,]$ with even indices, where every element $ g_{2j}$ is a commutator in
$y_{i_1}, y_{i_2}, \dots, y_{i_r}$ such that the sum of indices of
all the elements involved in $g_{2j}$ is congruent to $2j$
modulo~$2^n$.
\end{corollary}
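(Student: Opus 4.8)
The plan is to deduce Corollary~\ref{cor-shu} from Theorem~\ref{t-shu} by a standard ``free Lie ring'' / ``universal'' argument. First I would set up the following free object: take a free Lie ring $F$ on free generators $y_{i_1},\dots,y_{i_r}$ (treated as distinct symbols even if some formal indices coincide), and make $F$ into a $(\mathbb{Z}/2^n\mathbb{Z})$-graded Lie ring by declaring $y_{i_k}$ to be homogeneous of degree $i_k$ modulo $2^n$; the grading component $F_s$ is then spanned by all simple commutators whose multiset of generator-indices sums to $s$ modulo $2^n$. Inside $F$ let $I$ be the homogeneous ideal generated by (a) all homogeneous commutators lying in $F_0$, i.e.\ with zero sum of indices mod $2^n$, and (b) all commutators of the form $[g_{2u_1},\dots,g_{2u_{c+1}}]$ of weight $c+1$ in \emph{even}-degree homogeneous elements $g_{2u_j}$ of $F$ (equivalently, $\gamma_{c+1}(F^+)$ together with everything in $F_0$). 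The statement to be proved is exactly that $(\gamma_{f+1}\circ\gamma_{c+1})(y_{i_1},\dots,y_{i_r})\in I$.

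Next I would pass to the quotient $\bar F=F/I$ with the induced $(\mathbb{Z}/2^n\mathbb{Z})$-grading $\bar F=\bar F_0\oplus\cdots\oplus\bar F_{2^n-1}$. By construction $\bar F_0=0$, since all of $F_0$ was thrown into $I$. Moreover the image $\bar F^+=\bigoplus_{i\ \mathrm{even}}\bar F_i$ is nilpotent of class at most $c$: indeed $\gamma_{c+1}(\bar F^+)$ is spanned by images of commutators $[g_{2u_1},\dots,g_{2u_{c+1}}]$ in homogeneous even-degree elements of $F$ — note one must observe that an arbitrary element of $F^+$ is a sum of homogeneous elements of even degree, so by multilinearity of the commutator it suffices to kill commutators of homogeneous even-degree elements — and all such commutators lie in $I$ by part (b) of the definition of $I$. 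Hence $\bar F$ satisfies the hypotheses of Theorem~\ref{t-shu} (with the same $n$ and $c$), so the subring generated by $\bar F^-$ is nilpotent of class at most $f=f(n,c)$; equivalently, by the displayed consequence of Theorem~\ref{t-shu}, $\gamma_{f+1}(\gamma_{c+1}(\bar F))=0$, i.e.\ $(\gamma_{f+1}\circ\gamma_{c+1})$ of any $r=(c+1)(f+1)$ elements of $\bar F$ vanishes. Applying this to the generators $\bar y_{i_1},\dots,\bar y_{i_r}$ gives $(\gamma_{f+1}\circ\gamma_{c+1})(y_{i_1},\dots,y_{i_r})\equiv 0$ in $\bar F$, i.e.\ this element of $F$ lies in $I$.

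Finally I would unwind what ``lies in $I$'' means combinatorially. Since $I$ is generated as an ideal by homogeneous elements of the two listed types, and since $(\gamma_{f+1}\circ\gamma_{c+1})(y_{i_1},\dots,y_{i_r})$ is a homogeneous element of $F$ that is a fixed multilinear commutator in the $y_{i_k}$, an element of $I$ of this multidegree can be written as a $\mathbb{Z}$-linear combination of commutators each of which involves, as a proper subcommutator, either an element of $F_0$ (a subcommutator with zero sum of indices mod $2^n$) or one of the generators of type (b) (a weight-$(c+1)$ subcommutator $[g_{2u_1},\dots,g_{2u_{c+1}}]$ in even-degree pieces $g_{2u_j}$, each $g_{2u_j}$ itself a commutator in the $y_{i_k}$ with index-sum $\equiv 2u_j$). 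Because everything is multilinear and the free Lie ring is spanned by simple (left-normed) commutators, I can rewrite this combination using only simple commutators in the original letters $y_{i_1},\dots,y_{i_r}$ while preserving the presence of such a distinguished subcommutator — this is exactly the claimed conclusion. The only mild subtlety, and the step I would write out with most care, is the last rewriting: one must make sure that expanding brackets via the Jacobi/anticommutativity identities to reach simple-commutator form does not destroy the guaranteed subcommutator; this is handled by the usual observation that an ideal element of a given multidegree in a free Lie ring, obtained from a homogeneous generator $g$ of the ideal, can be expressed as a linear combination of simple commutators each still containing a bracketing of (a rearrangement of) the letters of $g$ as a subcommutator — a routine collection process that I would reference or spell out briefly rather than belabor.
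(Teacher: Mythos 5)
Your proposal is correct and takes essentially the same route as the paper: a free Lie ring on $y_{i_1},\dots,y_{i_r}$ graded by index sums modulo $2^n$, factored by the (homogeneous) ideal generated by the zero component together with $\gamma_{c+1}$ of the even part, to which Theorem~\ref{t-shu} (via its displayed consequence $\gamma_{f+1}(\gamma_{c+1}(\cdot))=0$) is applied, after which membership in that ideal is read off combinatorially and transferred to arbitrary elements of an arbitrary Lie ring by freeness. The only differences are cosmetic: the paper records the outcome directly as the inclusion $(\gamma_{f+1}\circ \gamma_{c+1})(x_{i_1},\dots,x_{i_r})\in{}_{{\rm id}}\!\left<M_0\right>+\gamma_{c+1}\big(M_0+M_2+\dots+M_{2^n-2}\big)$ rather than via an explicit quotient, and your closing worry about rewriting everything into simple (left-normed) commutators is unnecessary, since the corollary only asks for a linear combination of commutators containing a distinguished subcommutator, which the standard spanning set $[s,a_1,\dots,a_k]$ of an ideal ${}_{{\rm id}}\!\left<s\right>$ already provides.
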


\begin {proof}
  Let $M$ be a free Lie ring freely generated by $x_{i_1},
x_{i_2}, \dots, x_{i_{r}}$. For each
$i=0,\,1,\,\dots ,2^n-1$, let  $M_i$ be the additive subgroup of $M$
generated by all commutators in the
generators $x_{i_j}$ with the sum of indices congruent to $i$
modulo $2^n$. Then, obviously, $M=M_0\oplus M_1\oplus \cdots
\oplus M_{2^n-1}$ and $ [M_i,M_j]\subseteq M_{i+j\,({\rm mod\,
2^n)}}$, so  this is a $({\Bbb Z} /2^n{\Bbb Z})$-grading.
 By Theorem~\ref{t-shu} we obtain
 $$
 (\gamma_{f+1}\circ \gamma_{c+1})(x_{i_1},x_{i_2},\ldots,
x_{i_r}) \in {}_{{\rm
id}}\!\left<M_0\right>+
\gamma _{c+1}\big(M_0+M_2+\dots +M_{2^{n}-2}\big).
$$
By the definition of the $M_i$ this inclusion
is equivalent to the required equality for $y_{i_j}=x_{i_j}$. Since the elements
$x_{i_1}, x_{i_2}, \dots x_{i_r}$ freely generate the Lie ring
$M$, the same equality holds for any elements $y_{i_j}$ in any Lie ring.
\end{proof}

The following theorem was proved by P.~Hall \cite{hall} for
groups; the assertion for Lie rings is proved by  essentially the
same (even simpler) arguments. The bound for the nilpotency class
was later improved by other authors, up to the best possible bound
in \cite{stew}.

\begin{theorem}[{P.~Hall \cite{hall}}]\label{t-hall}
If a Lie ring $L$ has a nilpotent ideal $K$ of nilpotency
class $k$ such that the quotient $L/[K,K]$ is nilpotent of class
$l$, then $L$ is nilpotent of $(k,l)$-bounded class.
\end{theorem}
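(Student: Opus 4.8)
The plan is to make the lower central series of $L$ descend through that of $K$ at a $(k,l)$-bounded rate, using the hypothesis $\gamma_{l+1}(L)\subseteq [K,K]=\gamma_2(K)$ (merely a restatement of ``$L/[K,K]$ is nilpotent of class $l$'') as the driving mechanism and $\gamma_{k+1}(K)=0$ as the terminating condition. Concretely, I would prove by induction on $i\geq 1$ that there exist $(k,l)$-bounded integers $m_i$ with $\gamma_{m_i}(L)\subseteq \gamma_{i+1}(K)$; the case $i=1$ is the hypothesis itself, with $m_1=l+1$, and taking $i=k$ at the end yields $\gamma_{m_k}(L)\subseteq \gamma_{k+1}(K)=0$, so $L$ is nilpotent of class at most $m_k-1$, a $(k,l)$-bounded quantity.

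For the inductive step, assume $\gamma_{m_i}(L)\subseteq \gamma_{i+1}(K)$ and put $N=(i+1)(l-1)+1$. Then $\gamma_{m_i+N}(L)=[\gamma_{m_i}(L),\underbrace{L,\dots,L}_{N}]\subseteq [\gamma_{i+1}(K),\underbrace{L,\dots,L}_{N}]$, so it suffices to prove $[\gamma_{i+1}(K),\underbrace{L,\dots,L}_{N}]\subseteq \gamma_{i+2}(K)$, and then $m_{i+1}=m_i+N$ works. Now $\gamma_{i+1}(K)$ is additively generated by left-normed commutators $z=[a_1,\dots,a_{i+1}]$ with all $a_p\in K$, and each $\mathrm{ad}(g)$ with $g\in L$ is a derivation of $L$; hence repeated use of the Leibniz rule expands $[z,g_1,\dots,g_N]$ into a sum, over all maps $\phi\colon\{1,\dots,N\}\to\{1,\dots,i+1\}$, of left-normed commutators $[b^{\phi}_1,\dots,b^{\phi}_{i+1}]$ in which $b^{\phi}_p=[a_p,g_{j_1},\dots,g_{j_s}]$ with $j_1<\dots<j_s$ the elements of $\phi^{-1}(p)$. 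Each $b^{\phi}_p$ lies in $K$, since $K$ is an ideal. By pigeonhole some fibre $\phi^{-1}(p_0)$ has size at least $l$, so $b^{\phi}_{p_0}$ is a commutator of weight at least $l+1$ and therefore $b^{\phi}_{p_0}\in \gamma_{l+1}(L)\subseteq \gamma_2(K)$; then $[b^{\phi}_1,\dots,b^{\phi}_{p_0},\dots,b^{\phi}_{i+1}]$ has one entry in $\gamma_2(K)$ and its other $i$ entries in $K$, hence lies in $\gamma_{i+2}(K)$: absorb the prefix $[b^{\phi}_1,\dots,b^{\phi}_{p_0-1}]\in \gamma_{p_0-1}(K)$ into $\gamma_2(K)$ to reach $\gamma_{p_0+1}(K)$, then commute with the remaining $i+1-p_0$ entries, which lie in $K$. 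So every summand lies in $\gamma_{i+2}(K)$, as needed.

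The only delicate point is the Leibniz expansion of $[z,g_1,\dots,g_N]$ asserted above: one checks, by induction on $N$ from the identity $\mathrm{ad}(g)[x_1,\dots,x_m]=\sum_p[x_1,\dots,\mathrm{ad}(g)x_p,\dots,x_m]$ for left-normed commutators, that inserting the $g_j$ one at a time never breaks open a bracket already built around a leaf $a_p$, so the $g_j$ accumulate only at the leaves of $z$, and there left-normed in the order of insertion. This is exactly what legitimises the pigeonhole count $N=(i+1)(l-1)+1$; the remainder — the weight bookkeeping inside $K$ and the estimate that $m_k=(l+1)+\sum_{i=1}^{k-1}\bigl((i+1)(l-1)+1\bigr)$ is $(k,l)$-bounded — is elementary. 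The argument parallels P.~Hall's original group-theoretic proof with the Leibniz expansion replacing commutator collection; alternatively one could induct on $k$ by passing to $L/\gamma_k(K)$, but the single induction on $i$ above is the shorter route.
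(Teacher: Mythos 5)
Your proof is correct: the induction $\gamma_{m_i}(L)\subseteq\gamma_{i+1}(K)$ with step length $N_i=(i+1)(l-1)+1$ is sound, the Leibniz expansion of $[z,g_1,\dots,g_N]$ (the $g_j$ accumulating left-normed at the leaves of $z$, justified by $\mathrm{ad}$ being a derivation) is the right mechanism, and the pigeonhole argument together with the standard inclusion $[\gamma_a(K),\gamma_b(K)]\subseteq\gamma_{a+b}(K)$ does place every summand in $\gamma_{i+2}(K)$, so $\gamma_{m_k}(L)\subseteq\gamma_{k+1}(K)=0$ with $m_k$ a $(k,l)$-bounded number. Note that the paper gives no proof of Theorem~\ref{t-hall}: it cites P.~Hall's group-theoretic argument and remarks that the Lie-ring case follows by essentially the same (even simpler) reasoning, and your argument is exactly such an adaptation, with the derivation expansion replacing commutator collection; it yields a cruder bound than Hall's (or the best-possible one of \cite{stew}), but the statement only requires a $(k,l)$-bounded class.
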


The following lemmas  are well-known properties of fixed-point
subgroups. As a rule, the induced automorphism of a quotient group
by an invariant  normal subgroup is denoted by the same letter.

\begin{lemma}\label{l-fp} Let $\alpha$ be an automorphism of a finite group
$G$, and $N$ a normal $\alpha$-invariant subgroup of $G$.

{\rm (a)} Then $|C_{G/N}(\alpha )|\leqslant |C_{G}(\alpha )|$.

{\rm (b)} If in addition $(|N|, |\alpha |)=1$, then $C_{G/N}(\alpha )=
C_{G}(\alpha )N/N$. \end{lemma}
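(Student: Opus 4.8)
For part~(a) I would use the displacement map $\theta\colon G\to G$ given by $\theta(g)=g^{-1}g^{\alpha}$. This is not a homomorphism, but its nonempty fibres all have the same cardinality: from $\theta(g)=\theta(g')$ one gets $g'g^{-1}=(g'g^{-1})^{\alpha}$, so $\theta(g)=\theta(g')$ if and only if $g'\in C_G(\alpha)g$, and hence each nonempty fibre of $\theta$ is a left coset of $C_G(\alpha)$, of size $|C_G(\alpha)|$. Since $N$ is $\alpha$-invariant, a coset $gN$ is $\alpha$-invariant precisely when $g^{-1}g^{\alpha}\in N$, that is, when $\theta(g)\in N$; thus $\theta^{-1}(N)$ is exactly the preimage in $G$ of $C_{G/N}(\alpha)$, so that $|\theta^{-1}(N)|=|N|\,|C_{G/N}(\alpha)|$. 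On the other hand $\theta^{-1}(N)$ is the disjoint union of the fibres $\theta^{-1}(n)$ over $n\in N$, each of which is empty or of size $|C_G(\alpha)|$, so $|\theta^{-1}(N)|\leqslant |N|\,|C_G(\alpha)|$. Comparing the two expressions and cancelling $|N|$ gives part~(a).

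For part~(b) the inclusion $C_G(\alpha)N/N\subseteq C_{G/N}(\alpha)$ is immediate. For the converse it is enough to show that if a coset $xN$ is $\alpha$-invariant, then $\langle\alpha\rangle$ fixes some point of it: such a point $y$ lies in $C_G(\alpha)$ and satisfies $yN=xN$, whence $\overline{x}\in C_G(\alpha)N/N$. Now $N$ acts simply transitively on the set $xN$ by right multiplication, and this action is compatible with the action of $\langle\alpha\rangle$ on $xN$, since $(yn)^{\alpha}=y^{\alpha}n^{\alpha}$; as $(|N|,|\alpha|)=1$, a fixed point exists by the standard coprime-action argument --- for instance by Glauberman's lemma applied to these two actions, or by an induction on $|N|$ reducing to the case of an elementary abelian $N$, where the cocycle $\alpha^{j}\mapsto x^{-1}x^{\alpha^{j}}$ is a coboundary by averaging over $\langle\alpha\rangle$, the non-soluble case being covered by the conjugacy part of the Schur--Zassenhaus theorem. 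This proves part~(b).

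Neither part is deep. The two points calling for a little care are that the map $\theta$ in~(a) is \emph{not} a homomorphism, so one must reason with its fibres rather than with a kernel and image; and, in~(b), the essential use of coprimality (without it the coset $xN$ may carry a fixed-point-free $\langle\alpha\rangle$-action) together with the standard but non-obvious fact that the required fixed point persists even when $N$ is non-abelian. Everything else is routine coprime-action bookkeeping, and both statements are classical, so one may alternatively just cite a standard reference.
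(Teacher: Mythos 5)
Your proposal is correct. Note that the paper itself offers no proof of this lemma: it is stated as a ``well-known property of fixed-point subgroups'' and used as a citation-level fact, so there is no in-paper argument to compare yours with. Your argument for (a) --- counting fibres of the displacement map $\theta(g)=g^{-1}g^{\alpha}$, whose nonempty fibres are the cosets $C_G(\alpha)g$ (right cosets in the usual convention, a purely terminological point) and whose preimage of $N$ is exactly the preimage of $C_{G/N}(\alpha)$ --- is the standard proof and is carried out correctly. For (b) you rightly isolate the real content: one needs a fixed point of $\langle\alpha\rangle$ on an invariant coset $xN$, and since $|\alpha|$ need not be a prime power the naive orbit-counting argument does not suffice; invoking Glauberman's lemma (with $\langle\alpha\rangle$ cyclic, hence soluble, so no appeal to Feit--Thompson is needed) for the transitive $N$-action commuting appropriately with $\alpha$, or the reduction to elementary abelian $N$ plus the conjugacy part of Schur--Zassenhaus, is exactly the classical treatment. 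In short, your write-up supplies a complete and correct proof of a statement the authors deliberately left to the literature.
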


The following lemma follows from the consideration of the Jordan normal form of the automorphism regarded as a linear transformation of invariant elementary abelian sections.

\begin{lemma}\label{l-fpp2} Let $p$ be a prime number and suppose
that a
finite abelian group~$A$ of exponent~$p^a$  admits an automorphism of order~$p^k$ with exactly $p^b$ fixed points. Then $|A|\leqslant p^{abp^k}$.
\end{lemma}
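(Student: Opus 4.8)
The plan is to reduce to the cyclic case by decomposing $A$ into $\alpha$-invariant pieces, where $\alpha$ denotes the automorphism of order $p^k$. First I would pass to the group ring: extend scalars and regard $A$ as a module for the group ring of $\langle\alpha\rangle$ over $\mathbb{Z}/p^a\mathbb{Z}$, so that the relevant invariant elements are recorded by a Jordan-type normal form of $\alpha$ acting on the elementary abelian sections $p^iA/p^{i+1}A$ for $i=0,1,\dots,a-1$. On each such section, which is a vector space over $\mathbb{F}_p$, the transformation induced by $\alpha$ has order dividing $p^k$, hence is unipotent (its minimal polynomial divides $(t-1)^{p^k}$ over $\mathbb{F}_p$), so it decomposes as a direct sum of Jordan blocks $J_s$ of sizes $s\le p^k$, each with eigenvalue $1$.

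Next I would count fixed points block by block. A single Jordan block $J_s$ over $\mathbb{F}_p$ contributes exactly $p^1$ fixed points (a one-dimensional fixed subspace), regardless of $s$. Hence if the section $p^iA/p^{i+1}A$ decomposes into $b_i$ Jordan blocks, it has exactly $p^{b_i}$ fixed points, and the total dimension of that section is at most $b_i\cdot p^k$. Summing over $i$ gives $|A|=\prod_{i=0}^{a-1} p^{\dim(p^iA/p^{i+1}A)} \le \prod_{i=0}^{a-1} p^{b_i p^k} = p^{p^k \sum_i b_i}$. It remains to bound $\sum_i b_i$ by $b$, where $p^b=|C_A(\alpha)|$. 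This is where a little care is needed: the fixed points of $\alpha$ on $A$ itself need not surject onto the fixed points on each quotient section. Because $(|A|,|\alpha|)$ need not be coprime, Lemma~\ref{l-fp}(b) does not apply to the $\alpha$-invariant filtration $A \ge pA \ge \cdots \ge p^aA=0$. However, one always has the reverse inequality on the bottom of the filtration: $C_A(\alpha)\cap p^iA$ maps into $C_{p^iA/p^{i+1}A}(\alpha)$, and more usefully, passing to the socle series of $A$ as a $\mathbb{Z}/p^a\mathbb{Z}[\langle\alpha\rangle]$-module, the fixed subgroup $C_A(\alpha)$ contains one independent fixed vector from the bottom $\mathbb{F}_p$-layer of each indecomposable summand of $A$. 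Decomposing $A$ (after extending scalars, using that $\mathbb{Z}/p^a\mathbb{Z}[\langle\alpha\rangle]$ is a commutative local ring) into indecomposable $\alpha$-invariant summands, each summand is a module over a uniserial ring and contributes exactly one $\mathbb{F}_p$-line to $C_A(\alpha)$, while spanning at most $a$ layers each of $\mathbb{F}_p$-dimension at most $p^k$; thus each indecomposable summand has order at most $p^{ap^k}$ and $A$ is a direct sum of $b$ of them, giving $|A|\le p^{abp^k}$ as claimed.

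The main obstacle, as indicated by the hint in the statement ("consideration of the Jordan normal form on invariant elementary abelian sections"), is precisely the bookkeeping that relates the number of fixed points of $\alpha$ on all of $A$ to the number of Jordan blocks in the layers of the $p$-adic filtration when $p \mid |A|$. Rather than fight the non-coprimeness directly, the cleanest route is the indecomposable-summand decomposition just sketched: over the commutative local ring $\mathbb{Z}/p^a\mathbb{Z}[t]/(t^{p^k}-1) \cong \mathbb{Z}/p^a\mathbb{Z}[s]/\big((1+s)^{p^k}-1\big)$ one shows each indecomposable summand is uniserial, hence has a one-dimensional socle over $\mathbb{F}_p$ (contributing one generator to $C_A(\alpha)$) and composition length at most $a$, each composition factor being an $\mathbb{F}_p$-space of dimension at most $p^k$ by the unipotent Jordan block bound. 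This gives the bound with no coprimeness hypothesis needed.
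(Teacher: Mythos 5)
Your first step --- filtering $A$ by the subgroups $p^iA$, observing that $\alpha$ induces a unipotent transformation on each elementary abelian section $p^iA/p^{i+1}A$ whose Jordan blocks have size at most $p^k$ and each contribute a one-dimensional fixed space --- is exactly the intended argument. But the bookkeeping you then struggle with has a one-line resolution that you overlook: Lemma~\ref{l-fp}(a) needs \emph{no} coprimality assumption (only part (b) does), so $|C_{p^iA/p^{i+1}A}(\alpha)|\leqslant |C_{p^iA}(\alpha)|\leqslant |C_A(\alpha)|=p^b$ for every $i$. Hence each section has at most $b$ Jordan blocks and thus order at most $p^{bp^k}$, and since there are at most $a$ sections, $|A|\leqslant p^{abp^k}$. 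Note also that the intermediate goal you set yourself, $\sum_i b_i\leqslant b$, is false: for $A=\mathbb{Z}/p^2\mathbb{Z}$ and $\alpha\colon x\mapsto (1+p)x$ one has $b=1$ but $b_0=b_1=1$. Only the per-section bound $b_i\leqslant b$ is needed, and that is precisely what Lemma~\ref{l-fp}(a) gives.

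The substitute argument via indecomposable summands contains genuine errors, not just extra machinery. The ring $R=(\mathbb{Z}/p^a\mathbb{Z})[t]/(t^{p^k}-1)$ is local (no scalar extension is needed for that), but for $a\geqslant 2$, $k\geqslant 1$ it is not uniserial: its maximal ideal $(p,\,t-1)$ is not principal, and already the regular module is an indecomposable that is not uniserial (the ideals $(p)$ and $(t-1)$ are incomparable). An indecomposable summand also need not meet $C_A(\alpha)$ in exactly one $\mathbb{F}_p$-line: the trivial module $\mathbb{Z}/p^a\mathbb{Z}$ contributes $p^a$ fixed points, and for $p^a=4$, $p^k=2$ the regular module has fixed subgroup $(1+t)R$ of order $4$. (What is true is ``at least one line'', since the socle is nonzero and $\alpha$-fixed; that does give that the number of summands is at most $b$.) Most seriously, the claim that every indecomposable $M$ satisfies $|M|\leqslant p^{ap^k}$ fails: because its maximal ideal is not principal, $R$ is not of finite representation type (Cohen--Kaplansky), and by Brauer--Thrall/Roiter it has indecomposable modules of arbitrarily large order, which therefore have some layer $p^iM/p^{i+1}M$ of $\mathbb{F}_p$-dimension exceeding $p^k$. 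Such large indecomposables have correspondingly large fixed subgroups --- which is why the lemma itself survives --- but they destroy your per-summand accounting of ``one fixed line, order at most $p^{ap^k}$''. So the proposal as written does not prove the lemma; the correct and much shorter route is the layer-by-layer count combined with Lemma~\ref{l-fp}(a).
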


 Recall that a  \emph{multilinear} (or \emph{outer})
\emph{commutator} is any commutator $\varkappa $ of weight $w$  in $w$ distinct group  variables; in other words, $\varkappa $ is obtained by nesting commutators, but using always
different variables. Laws $\varkappa =1$ for multilinear commutators $\varkappa $ define many popular soluble group varieties, including those of nilpotent groups of given class, and of soluble groups of given derived length. The following  Khukhro--Makarenko theorem \cite{khu-mak-char} greatly facilitates working with subgroups of finite index satisfying a multilinear commutator law. In the special case of nilpotency laws this result was obtained by Bruno and Napolitani \cite{brna}. (Further generalizatons and improvements of this theorem were obtained in \cite{kh-kl-ma-me,kl-me,kl-mi}.)

\begin{theorem}[{\cite[Theorem~1]{khu-mak-char}}]\label{t-char}
If a group $G$ has a subgroup $H$ of finite
index $k$ satisfying the law $\varkappa (H)=1$, where
$\varkappa$ is a multilinear commutator of weight $w$, then $G$
also has a characteristic subgroup $C$ of finite $(k,w)$-bounded index
satisfying the same law $\varkappa (C)=1$.
\end{theorem}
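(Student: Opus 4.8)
The plan is to build $C$ by a verbal/marginal-type construction, by a double induction — on the weight $w$ of $\varkappa$ (via a decomposition $\varkappa=[\lambda,\mu]$) and on the derived length of the verbal subgroup $\varkappa(G)$ — with the basic case being that of a multilinear commutator whose verbal subgroup is abelian. The guiding principle throughout is that the multilinearity of $\varkappa$ is exactly what keeps the index estimates $(k,w)$-bounded; for a general word the statement can fail, so any argument must use multilinearity essentially.

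First I would make the routine reduction to normal $H$: replacing $H$ by its core $H_G=\bigcap_{g\in G}H^{g}$ only lowers the index — now to at most $k!$ — and preserves $\varkappa(H_G)=1$ since $H_G\leqslant H$. So assume $H\unlhd G$ and set $j=[G:H]\leqslant k!$. Since $\varkappa(G)$ is an extension of $\varkappa(G)\cap H\leqslant H$, on which $\varkappa$ vanishes, by the finite group $\varkappa(G)H/H$ of order at most $j$, and since a multilinear commutator law of weight $w$ defines a soluble variety of $w$-bounded derived length, the group $G$ is itself soluble of $(k,w)$-bounded derived length; in particular $\varkappa(G)$ has $(k,w)$-bounded derived length, so the second induction terminates. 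For the first, write $\varkappa=[\lambda,\mu]$ with $\lambda,\mu$ multilinear commutators of weights $<w$ in disjoint sets of variables (the case $w=1$ being trivial, as $\varkappa(H)=1$ then forces $H=1$, $|G|=k$, $C=1$).

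The genuine difficulty is that the two most natural characteristic subgroups pull in opposite directions. On one hand $\bigcap_{\alpha\in\mathrm{Aut}\,G}H^{\alpha}$ is characteristic and still satisfies $\varkappa=1$, being contained in $H$, but its index need not be $(k,w)$-bounded — and when $G$ is not finitely generated it may even be infinite. On the other hand the subgroup generated by all automorphic images of $H$, or by all normal subgroups of $G$ of index $\leqslant j$ on which $\varkappa$ vanishes, is characteristic of index $\leqslant j$ but need not satisfy $\varkappa=1$: already in $Q_8$, with the abelian law $\varkappa=[x,y]$ and $H=\langle i\rangle$, the two $\varkappa$-trivial subgroups $\langle i\rangle$ and $\langle j\rangle$ generate the whole group, whereas the correct answer is $Z(Q_8)$. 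One must therefore produce a characteristic subgroup strictly between these two, and the whole problem lies in quantitatively controlling its index.

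The construction I would use rests on P.~Hall's theory of the verbal subgroup $\varkappa(G)$ and the marginal subgroup $\varkappa^{*}(G)$ — the set of $g\in G$ for which $\varkappa$ is unchanged when any one of its arguments is replaced by its product with $g$, over all substitutions — which is characteristic and comes with Hall-type bounds relating $|\varkappa(G)|$ and $[G:\varkappa^{*}(G)]$. For the derived-length induction one passes to $G/\varkappa(G)'$, whose verbal subgroup $\varkappa(G)/\varkappa(G)'$ is abelian; a characteristic subgroup of bounded index on which $\varkappa$ vanishes there pulls back to a characteristic subgroup $C\leqslant G$ of bounded index with $\varkappa(C)\leqslant\varkappa(G)'$, and applying the induction to $C$ (which still contains $C\cap H$ of finite index with $\varkappa=1$) and using that ``characteristic in characteristic is characteristic'' completes the step. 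In the basic case $\varkappa(G)$ abelian, $\varkappa$ becomes, modulo a marginal-type characteristic subgroup, an essentially multilinear (polynomial-type) map into an abelian group which vanishes on a subgroup of index $\leqslant k$; one then produces, by iterating ``orthogonal complements'' with respect to this map and then intersecting over automorphic images (equivalently, over all index-$\leqslant k$ subgroups on which it vanishes), a characteristic subgroup of $(k,w)$-bounded index on which the map, hence $\varkappa$, vanishes. The hard part, absorbing essentially all of the work, is exactly this index bookkeeping: a priori one only sees bounds involving $|\mathrm{Aut}\,G|$ or the (unbounded) number of subgroups of index $\leqslant j$, and converting these into a genuine $(k,w)$-bound — showing that only boundedly many of the relevant conditions are independent — is where the combinatorics of multilinear commutators is indispensable.
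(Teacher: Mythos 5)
This theorem is not proved in the paper at all: it is quoted verbatim from \cite[Theorem~1]{khu-mak-char}, so the only fair comparison is with the actual Khukhro--Makarenko argument, and against that your text is a programme rather than a proof. The decisive step --- producing a characteristic subgroup whose index is bounded in terms of $k$ and $w$ alone, rather than in terms of $|\mathrm{Aut}\,G|$ or of the (unbounded, possibly infinite) number of subgroups of index at most $k$ --- is precisely the point you leave open, and you say so yourself (``the hard part, absorbing essentially all of the work, is exactly this index bookkeeping''). That bookkeeping \emph{is} the theorem: the known proofs construct $C$ explicitly (in \cite{khu-mak-char} by induction on the weight, as a product of verbal-type subgroups built from intersections of boundedly many automorphic images of $H$, with a combinatorial lemma showing that boundedly many images suffice; in \cite{kl-me} via a different short argument), whereas ``iterating orthogonal complements with respect to an essentially multilinear map and then intersecting over automorphic images'' is never made precise, and no mechanism is offered for why only boundedly many of the resulting conditions are independent. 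The appeal to Hall's marginal subgroup does not help either: under the hypotheses $\varkappa^{*}(G)$ may well have unbounded or infinite index (the Hall-type implication you would need, from smallness of $\varkappa(G)$-data to bounded index of the marginal subgroup, fails in general), so it cannot replace the missing construction.

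There is also a concrete false step in the reduction that your second induction rests on. From $\varkappa(H_G)=1$ and $[G:H_G]\leqslant k!$ you conclude that $G$, and hence $\varkappa(G)$, is soluble of $(k,w)$-bounded derived length. This is not true: $G/H_G$ is merely a finite group of order at most $k!$ and need not be soluble (take $G$ a nonabelian finite simple group and $H=1$, which satisfies the hypotheses with $k=|G|$); what is true is only that $H$ itself is soluble of derived length less than $w$. Consequently the parameter ``derived length of $\varkappa(G)$'' need not be finite, the passage to $G/\varkappa(G)'$ does not set up a terminating induction, and the whole two-parameter scheme as written does not get off the ground. Any repair must either induct on data attached to $H$ (or to $\varkappa(H_G)$) or first dispose of a nonsoluble quotient separately --- and even then the base case still requires the quantitative construction discussed above, which is absent.
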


\section{Lie rings}\label{s-rings}

The bulk of the proof of Theorem~\ref{t2} is about $({\Bbb Z}/2^n{\Bbb Z})$-graded Lie rings considered in the following proposition.

\begin{proposition}\label{p1} Suppose that  $L= L_0 \oplus L_1 \oplus \dots
\oplus L_{2^n-1}$ is a $({\Bbb Z}/2^n{\Bbb Z})$-graded Lie ring.
    Suppose that the  subring $L^+$ is nilpotent of class $c$, while the component $L_0$ is finite of order $m$. Then $L$ contains
    a homogeneous nilpotent ideal $M$ of $(n,c)$-bounded nilpotency class such that $M\cap L^-$ has $(m,n)$-bounded index in the additive group $L^-$.
\end{proposition}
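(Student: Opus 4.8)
The plan is to combine the combinatorial form of Shumyatsky's theorem recorded in Corollary~\ref{cor-shu} with the method of graded centralizers of \cite{khu90,khu-mak04}, adapted to the present metanilpotent situation; throughout put $f=f(n,c)$ and $r=(c+1)(f+1)$. \emph{First} I reduce everything to commutators valued in the finite component $L_0$. Applying Corollary~\ref{cor-shu} with the formal indices taken to be the grading degrees, for arbitrary homogeneous $y_{i_1},\dots,y_{i_r}\in L$ the commutator $(\gamma_{f+1}\circ\gamma_{c+1})(y_{i_1},\dots,y_{i_r})$ is a ${\Bbb Z}$-linear combination of homogeneous commutators in the $y$'s, each containing either a subcommutator whose degrees add up to $0$ modulo $2^n$, hence one lying in $L_0$, or a left-normed weight-$(c+1)$ subcommutator $[g_{2u_1},\dots,g_{2u_{c+1}}]$ all of whose entries are homogeneous of even degree. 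The entries of the latter lie in the subring $L^+$, so such a subcommutator lies in $\gamma_{c+1}(L^+)=0$. Thus every obstruction to the vanishing of $(\gamma_{f+1}\circ\gamma_{c+1})$ on homogeneous elements --- and likewise every obstruction occurring in $\gamma_{c+1}(L)$ or in the ideal ${}_{\rm id}\langle L^-\rangle$ --- is a homogeneous commutator of weight at most $r$ taking values in the finite group $L_0$.

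\emph{Second}, I apply graded centralizers. A homogeneous commutator with values in $L_0$ is, as a function of any one of its entries with the others fixed, a homomorphism into the finite group $L_0$, and so has kernel of index at most $m$. Running through the finitely many relevant commutator patterns of weight at most $r$ and their entry positions in a fixed order, and iterating --- the grading guaranteeing that the process stabilizes after a number of steps bounded in terms of $n$ and $c$ --- one constructs homogeneous subgroups $\widetilde L_j\le L_j$ ($j=1,\dots,2^n-1$) of $(m,n,c)$-bounded index such that all these patterns vanish on the subring $N$ generated by the $\widetilde L_j$ with odd $j$. By the first step this makes $(\gamma_{f+1}\circ\gamma_{c+1})$ vanish on $N$; more to the point, it lets the inductive argument of \cite{shu01} behind Theorem~\ref{t-shu} be replayed inside $N$, each appeal there to the triviality of the zero grading component being replaced by the passage, already carried out, from a homogeneous subgroup to a subgroup of index at most $m$ in it. Together with the fact that $N\cap L^+$ is nilpotent of class at most $c$ and with P.~Hall's theorem (Theorem~\ref{t-hall}), this yields that $N$ is nilpotent of $(n,c)$-bounded class.

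\emph{Third}, I pass from the subring $N$ to a homogeneous ideal. Using Theorem~\ref{t3} one may assume at the outset that $L$ is soluble of $n$-bounded derived length, which gives enough module-theoretic control to produce --- by the graded-centralizer construction itself, or afterwards from $N$ via a Lie-ring analogue of Theorem~\ref{t-char} on subobjects of finite index satisfying a multilinear commutator law --- a homogeneous subring $M$ invariant under ${\rm ad}\,x$ for all $x\in L$, that is, a homogeneous ideal of $L$, which is still nilpotent of $(n,c)$-bounded class and still has $M\cap L_j$ of $(m,n)$-bounded index in $L_j$ for every odd $j$; in particular $M\cap L^-$ has $(m,n)$-bounded index in $L^-$, as required. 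The main obstacle is the second step: organizing the graded-centralizer construction so that it terminates after boundedly many steps while decreasing the index of each component only boundedly, and so that the bounded family of vanishing $L_0$-valued commutators it outputs is exactly what is needed to push Shumyatsky's nilpotency argument through on $N$. This is where the hypothesis enters essentially --- only $L^+$, not all of $L$, is almost fixed-point-free --- and where the method of \cite{khu90,khu-mak04} has to be modified; producing a genuine ideal of $L$ in the third step is a second, more routine technical point.
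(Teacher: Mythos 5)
Your first step is fine: Corollary~\ref{cor-shu} together with $\gamma_{c+1}(L^+)=0$ does reduce the obstruction to homogeneous commutators containing a subcommutator with values in $L_0$. But the second step, which you yourself identify as the crux, is not a proof --- it is a statement of what would need to be proved, and the naive mechanism you offer for it does not work as described. Fixing all entries of a pattern but one and taking the kernel of the resulting homomorphism into $L_0$ gives a subgroup of index at most $m$ \emph{for that particular choice of the other entries}; since those entries range over unboundedly many homogeneous elements, intersecting the kernels over ``the finitely many relevant patterns'' does not produce subgroups $\widetilde L_j$ of bounded index on which the patterns vanish identically, and there is no reason the iteration you sketch ``stabilizes after a number of steps bounded in terms of $n$ and $c$''. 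This is exactly the difficulty the method of graded centralizers is designed to overcome, and it is overcome in the paper not by iterating kernels on all of $L$, but by first invoking Theorem~\ref{t3} to get a homogeneous soluble ideal $A$ of $n$-bounded derived length and $(m,n)$-bounded index, then inducting on the derived length of $A$ (Proposition~\ref{p2}): inside the metabelian term $R=A^{(d-2)}$ one fixes boundedly many $r$-representatives and coset representatives, builds centralizers of levels $1,2,3$ in $L$ against those fixed elements only, and uses the metabelian structure (which allows free permutation of entries) plus Lemma~\ref{l-9} to show that the ideal $S=\bigl({}_{\rm id}\langle L_{\rm odd}(3)\rangle\cap A\bigr)^{(d-2)}$ is nilpotent of bounded class; the induction is then completed in $L/[S,S]$ via Hall's Theorem~\ref{t-hall}. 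Your appeal to ``replaying Shumyatsky's inductive argument inside $N$, replacing each use of $L_0=0$ by a passage to a subgroup of index $m$'' is likewise unsubstantiated: subgroups of bounded index are not ideals, vanishing of the listed patterns on generators of $N$ does not by itself propagate through Shumyatsky's induction, and your invocation of Hall's theorem lacks the required nilpotent ideal with nilpotent quotient.

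The third step also has a gap. Theorem~\ref{t3} does not let you ``assume at the outset that $L$ is soluble of $n$-bounded derived length''; it only provides a soluble ideal of bounded index, and the paper works with that ideal rather than replacing $L$. Nor is there a Lie-ring analogue of Theorem~\ref{t-char} available here: that theorem is stated and used only for groups, and producing from a nilpotent subring of bounded index a homogeneous \emph{ideal} of bounded class and bounded index is not a routine formality --- in the paper the ideal comes out of the construction itself ($S$ is a derived term of the ideal ${}_{\rm id}\langle L_{\rm odd}(3)\rangle\cap A$, and the final ideal is $B=J+S$), not from an afterthought transfer principle. So the overall architecture you propose (Corollary~\ref{cor-shu} plus a one-shot graded-centralizer construction on all of $L$, then an abstract ``make it an ideal'' step) omits the essential ideas --- the reduction to a soluble ideal, the induction on derived length, and the representative/coset-representative bookkeeping at three levels --- and as written the argument does not go through.
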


By Theorem~\ref{t3} the Lie ring $L$ contains a soluble
homogeneous ideal of $n$-bounded derived length and of
$(m,n)$-bounded index in $L$. Therefore
Proposition~\ref{p1} will be proved if we prove the following proposition, taking
advantage of induction on the derived length.

\begin{proposition}\label{p2}
Suppose that under the hypotheses of Proposition~\ref{p1} the Lie ring $L$ has a soluble homogeneous ideal
$A$ of derived length $d$ such that $A\cap L^-$ has index $l$ in the additive group $L^-$.
Then
 $L$ contains a homogeneous nilpotent ideal $B$ of $(d,n,c)$-bounded  nilpotency class such that $B\cap L^-$
 has $(d,l,m,n)$-bounded index in $L^-$.
\end{proposition}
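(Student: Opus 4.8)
The plan is to argue by induction on the derived length $d$ of the soluble ideal $A$, keeping the standing hypotheses of Proposition~\ref{p1} throughout; we may clearly assume $A$ has derived length \emph{exactly} $d$. The cases $d\le 1$ are immediate: for $d=0$ take $B=0$, and for $d=1$ the ideal $A$ is itself abelian, hence a homogeneous nilpotent ideal with $A\cap L^-$ of index $l$ in $L^-$, so $B=A$ works. For the inductive step ($d\ge 2$), set $V:=A^{(d-1)}$. This is an \emph{abelian} homogeneous ideal of $L$: a term of the derived series of an ideal is again an ideal, and it is homogeneous because $A$ is. Passing to $\overline L:=L/V$, all hypotheses are inherited (the even part of $\overline L$ is a quotient of $L^+$ and its zero component a quotient of $L_0$), the image $\overline A:=A/V$ is a soluble homogeneous ideal of derived length $\le d-1$, and $\overline A\cap\overline L^-$ has index at most $l$ in $\overline L^-$ (it contains the image of $A\cap L^-$). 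By the induction hypothesis $\overline L$ has a homogeneous nilpotent ideal $\overline B$ of $(d-1,n,c)$-bounded class, say $k$, with $\overline B\cap\overline L^-$ of $(d-1,l,m,n)$-bounded index in $\overline L^-$. Let $B_0$ be the full preimage of $\overline B$ in $L$; since $V\subseteq B_0$ this is a homogeneous ideal with $B_0/V$ nilpotent of class $k$ and with $B_0\cap L^-$ of the same $(d-1,l,m,n)$-bounded index in $L^-$.

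It remains to replace $B_0$ by a genuinely \emph{nilpotent} homogeneous ideal of $L$ of $(d,n,c)$-bounded class, without losing more than a $(d,l,m,n)$-bounded chunk of $L^-$; this is the heart of the matter, since $B_0$ is only `nilpotent-by-abelian'. Put $k':=\max\{k,c\}$ and $D:=\gamma_{k'+1}(B_0)$. Because $B_0/V$ is nilpotent of class $k\le k'$ we have $D\subseteq V$, so $D$ is an \emph{abelian} homogeneous ideal of $L$; and because $L/{}_{\rm id}\langle L^-\rangle$ is a quotient of $L^+$, hence nilpotent of class $c\le k'$, we also have $D\subseteq {}_{\rm id}\langle L^-\rangle$. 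Thus $B_0$ acts on the abelian ideal $D$ through the quotient $B_0/D$, which is nilpotent of class $\le k'$, and $B_0$ would be nilpotent of bounded class as soon as this action were nilpotent of bounded depth; the task is to arrange $[D,B_0,\dots,B_0]=0$ for a bounded number of entries on a homogeneous ideal whose odd components have bounded index.

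For this I would invoke the method of graded centralizers of \cite{khu90,khu-mak04}, using crucially that $|L_0|=m$. Iterating the centralizer construction one obtains a homogeneous subgroup of $L$ whose component in $L_j$ has bounded index (in terms of $m$, $n$ and the number of iterations performed) for every $j\ne 0$, on which all homogeneous commutators of bounded weight and of zero grade modulo $2^n$ vanish; the crucial point is that the construction terminates after \emph{boundedly many} steps, which is exactly what Corollary~\ref{cor-shu} provides: once zero-grade homogeneous commutators of weight up to $r:=(c+1)(f(n,c)+1)$ vanish, the Shumyatsky identity rewrites $(\gamma_{f+1}\circ\gamma_{c+1})$-type commutators in elements of the subgroup as combinations in which every surviving summand contains a weight-$(c+1)$ subcommutator in \emph{even} elements, which is zero because $L^+$ is nilpotent of class $c$. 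Combining this with the fact that $\gamma_{k'+1}(B_0)=D$ is abelian and applying P.~Hall's Theorem~\ref{t-hall} (with the nilpotent ideal $\gamma_{c+1}(\,\cdot\,)$) to the homogeneous ideal $B$ generated by the odd components of the subgroup — which lies in ${}_{\rm id}\langle L^-\rangle$, has $B\cap L^-$ of bounded index in $L^-$, and has $B/(B\cap D)$ nilpotent of class $\le k'$ — one concludes that $B$ is nilpotent of $(d,n,c)$-bounded class. Chaining the index estimates through the first step and this step then gives $B\cap L^-$ of $(d,l,m,n)$-bounded index in $L^-$, as required.

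The main obstacle is the graded-centralizer step itself. The naive `centralizer' of a family of homogeneous commutators is merely a subgroup, not an ideal, so one must carry out the construction in the careful staged manner of \cite{khu90} and then recover from it a homogeneous \emph{ideal} of $L$ whose odd components are still of bounded index and on which the commutators into the abelian ideal $D$ needed for nilpotency vanish. Equally delicate is the bookkeeping of the bounds: the number of centralizer iterations must be bounded independently of $|L|$ — this is precisely what Corollary~\ref{cor-shu} secures — and at each iteration only a subgroup of index at most $m$ in the relevant component may be discarded, since the `obstructions' produced by a zero-grade commutator live in $L_0$, which has $m$ elements. Marrying this construction with the abelian ideal $D$ and P.~Hall's theorem so as to obtain \emph{nilpotency of bounded class}, rather than merely solubility of bounded derived length (which would already follow from Theorem~\ref{t3}), is the technical core of the argument.
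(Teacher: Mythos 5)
Your reduction (induction on $d$, factoring out an abelian term of the derived series, inheriting the hypotheses in the quotient) is sound as far as it goes, but the proof has a genuine gap exactly at what you yourself call the heart of the matter, and the route you sketch to close it does not work. After factoring out $V=A^{(d-1)}$, your ideal $B_0$ is abelian-by-(nilpotent of bounded class), and to make it nilpotent of bounded class you must prove directly that $[D,B_0,\dots ,B_0]=0$ after a bounded number of steps. Your appeal to Theorem~\ref{t-hall} cannot supply this: the only nilpotent ideal at your disposal at this stage ($D$, or $V\cap B_0$) is \emph{abelian}, and for abelian $K$ Hall's hypothesis ``$B/[K,K]$ nilpotent'' is literally the desired conclusion, so the application is circular. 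Moreover, the claim that iterating graded centralizers yields a homogeneous subgroup of bounded index ``on which all homogeneous commutators of bounded weight and of zero grade modulo $2^n$ vanish'' overstates what the method delivers: the construction only annihilates commutators of very specific shapes (a centralizer element followed by the fixed representatives or coset representatives of the previous level with zero index sum, as in \eqref{e-cr}--\eqref{e-cll}), and the real work consists in reducing arbitrary commutators to those shapes. In the paper that reduction (Lemmas~\ref{l-freezer}, \ref{l-3}, \ref{l-9} and especially Lemma~\ref{l-r1}) leans essentially on metabelian structure --- free permutation of entries, the splitting $S=J_1+J_2$ --- available because everything happens inside $R=A^{(d-2)}$; your $B_0$ is neither metabelian nor contained in $A$, so none of these rewriting arguments is available as stated, and Corollary~\ref{cor-shu} by itself does not bound the number of centralizer iterations or substitute for them.

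By contrast, the paper runs the induction ``from the bottom'': it keeps the penultimate metabelian term $R=A^{(d-2)}$, builds $r$-representatives and graded centralizers of levels $1$--$3$, sets $I={}_{\rm id}\langle L_{\rm odd}(3)\rangle\cap A$ and $S=I^{(d-2)}\leqslant R$, and proves (Lemma~\ref{l-r1}) that $S$ is nilpotent of $(n,c)$-bounded class; only then does it pass to $L/[S,S]$, where $I/[S,S]$ has derived length at most $d-1$, and applies the induction hypothesis there. Because $S$ is in general non-abelian, Hall's Theorem~\ref{t-hall} with $K=S$ and the inductively obtained nilpotent quotient $(J+S)/[S,S]$ is non-vacuous and produces the $(d,n,c)$-bounded class. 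Your top-down reduction (removing $A^{(d-1)}$ first) is not wrong in itself, but it pushes all the difficulty into a final abelian extension, precisely where the tools you invoke (Hall's theorem, Theorem~\ref{t3}, a generic appeal to graded centralizers) do not suffice; the technical core --- the analogue of the construction of representatives and centralizers and of Lemma~\ref{l-r1} in your setting --- is deferred rather than proved.
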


\begin{proof} The sought-for ideal $B$ is constructed by using  certain
additive subgroups $L_j(t)\leqslant L_j$ of the components $L_{j}$,
so-called graded centralizers of levels $t=1,2,3$. We also use induction of $d$. Clearly, if $d=1$, then $A$ is abelian and we can put $B=A$. So we assume that $A$ is not
abelian.

Let $R=A^{(d-2)}$ be the penultimate (metabelian) term of the derived series
of $A$. First we construct graded centralizers
$R_j(1)\leqslant R_j$ in $R$, which are additive subgroups of $(m,n)$-bounded
index in the grading  components $R_j=R\cap L_j$, and fix certain
elements  called $r$-representatives,  whose total
number is $(m,n)$-bounded.

\begin{definition*} The \emph{pattern\/} of a homo\-geneous
commutator  is its bracket structure together with the arrangement
of the indices under the Index Convention. The \emph{weight\/} of a
pattern is the weight of the commutator. The commutator is then
called the \emph{value\/} of its pattern on the given elements. For
example, $[a_2,[b_1,b_1]]$ and $[x_2,[z_1,y_1]]$ are values of the
same pattern of weight~3.
\end{definition*}

\paragraph{\bf \boldmath Definition of representatives
in $R$.}  For every $i\ne 0$ and for every pair $({\bf P},a_0)$
 consisting of the pattern ${\bf P}$ of a
simple commutator of weight $2^n$ with one and the same index
$i\ne 0$
 (repeated $2^n$ times) and a commutator $a_0\in R_0$ that is the
value of this pattern on homo\-geneous elements of~$R_{i}$ we fix
one such  representation. (The same element $a_0\in R_0$ may appear
in different pairs if it is equal to values of different patterns;
the same pattern may appear in different pairs if different
commutators are the values of this pattern.) The elements of
$R_{j}$,\, $j\ne 0$, involved in these fixed representations are
called \emph{$r$-representatives} and
 denoted by $r_j(0)\in R_j$ under the Index
Convention: recall that the same symbol can denote different
elements. Thus, the commutator $a_0$ mentioned above is equal to
$[\underbrace{r_i(0),\ldots ,r_{i}(0)}_{2^n}]$.
 Since the total number of patterns ${\bf P}$ under
consideration is equal to $2^n-1$ and the number of elements in
$R_0$ is at most $m$, the number of $r$-representatives
 is $(m,n)$-bounded.
\medskip

The definition of $r$-representatives implies the following.

\begin{lemma} \label{l-freezer}
 Every simple homo\-geneous commutator in elements of $R$ of length $2^n$ with one and
the same  index $i\ne 0$ repeated $2^n$ times can be represented
as a commutator of the same pattern in
$r$-representatives. \end{lemma}

Before defining graded centralizers, we introduce the
following homomorphisms.

\begin{definition} \label{d-th}
Let $\vec z=(z_{i_1},\ldots ,z_{i_k})$ be an ordered tuple of
elements $z_{i_s}\in L_{i_s}$, \, $i_s\ne 0$, such that
$i_1+\cdots + i_k\not\equiv 0\, (\mbox{mod}\, 2^n)$. We put $j=-
i_1-\cdots - i_k\,  (\mbox{mod}\,  2^n)$ and define
 the mapping
 $$
 \vartheta _{\vec z}:\, \, y_j\rightarrow
[y_j,\, z_{i_1},\,\ldots ,\, z_{i_k}].\label{e1}
$$
By linearity this is a homomorphism of the additive group $L_j$
into $L_0$ 
by
the choice of~$j$.
 Since $|L_0|\leqslant m$, we have $|L_j:\mbox{Ker}\,
\vartheta _{\vec z}|\leqslant m$. Clearly, we also have
$|R_j:\mbox{Ker}\, \eta _{\vec z}|\leqslant m$ for the restriction
$\eta _{\vec z}$ of $\vartheta _{\vec z}$ to $R_j$.
\end{definition}

\paragraph{\bf \boldmath Definition of graded centralizers in $R$.}
 We define the \emph{graded
centralizers in $R$}
by setting for each $i\ne 0$
$$
R_i(1)=\bigcap_{\vec r}\,\mbox{Ker}\,\eta _{\vec r},
$$
where $ \eta _{\vec r}$ is defined in Definition~\ref{d-th} with
$\vec r=\left(r_{i}(0), \ldots , r_{i}(0)\right) $ running over
all possible ordered tuples of length  $2^n-1$ consisting of
(possibly different) $r$-representatives with the same index
$i$.  Elements of $R_i(1)$ for $i=1,\dots ,2^n-1$ are also
called \emph{centralizers in $R$} for short and are denoted by
$r_i(1)$ (under the Index Convention). The number of
$r$-representatives is $(m,n)$-bounded and $|R_i:\mbox{Ker}\, \eta
_{\vec r}|\leqslant m$ for all $\vec r$. Hence this is an intersection
of $ (m,n) $-boundedly many subgroups of $m$-bounded index
in~$R_i$ and therefore $R_i(1)$
 also has $(m,n)$-bounded index in the additive
group~$R_{i}$.
\medskip

By construction, we have the following centralizer property:
\begin{equation}\label{e-cr} \big[ r_i(1), \underbrace{r_{i}(0),\,\ldots ,\,
r_{i}(0)}_{2^n-1} \big]=0 \end{equation} 
for any centralizer  $r_i(1)\in
R_i(1)$ in $R$  and any $r$-representatives $r_i(0)$ with the same
index $i\ne 0$. (Here, as always under the Index Convention,
 the elements $r_{i}(0)$ can be different.)

We also need to introduce another set of representatives in
$R$.\medskip

\paragraph{\bf \boldmath Coset representatives in $R$}  For each $j\ne 0$ we fix an arbitrary system of coset representatives of the subgroup $R_j(1)$ in the additive
group~$R_j$. These elements are denoted by $ q_j\in  R_j$ (under
the Index Convention) and called \emph{coset repre\-sen\-ta\-tives
in $R$}. The total number of coset repre\-sen\-ta\-tives is
$(m,n)$-bounded, since the indices $|R_j:R_j(1)|$ are
$(m,n)$-bounded for all $j\ne 0$ by construction.
\medskip

Our next construction is of representatives and graded
centralizers in the whole ring  $L$.
\smallskip

\paragraph{\bf \boldmath Definition of level $1$ for $L$.}
 We define the \emph{graded
centralizers of level $1$} in $L$  by setting for each $i\ne 0$
$$
L_i(1)=\bigcap_{\vec z}\,\mbox{Ker}\,\vartheta _{\vec z},
$$
where the $ \vartheta _{\vec z}$ are defined in
Definition~\ref{d-th} and $\vec z=( q_{j}, \ldots , q_{j}) $
runs over all possible ordered tuples of length  $k\leqslant 2^n-1$
consisting of coset representatives in $R$ with the same index
$j\ne 0$ such that
$$
i+ kj\equiv 0\, (\mbox{mod}\, 2^n).
$$
(Under the Index Convention the tuple $\vec
z=( q_{i}, \ldots , q_{i}) $ may consist of different coset
representatives $q_i$.)  Elements of the $L_i(1)$ are also
called \emph{centralizers of level $1$ in $L$} and are denoted by
$y_i(1)$ (under the Index Convention). The number of coset
representatives in $R$ is $(m,n)$-bounded and $|L_i:\mbox{Ker}\,
\vartheta _{\vec z}|\leqslant m$ for all $\vec z$. Hence this is an
intersection of $ (m,n) $-boundedly many subgroups of $m$-bounded
index in~$L_i$ and therefore $L_i(1)$
 also has $(m,n)$-bounded index in the additive
group~$L_{i}$.

 For each
$j\ne 0$ we also fix an arbitrary system of coset representatives of
the subgroup $L_j(1)$ in the additive group $L_j$. These elements
are denoted by $ b_j(1)$ (under the Index Convention) and called
\emph{coset repre\-sen\-ta\-tives of level 1 in $L$}. The total
number of coset repre\-sen\-ta\-tives is $(m,n)$-bounded, since
the indices $|L_j:L_j(1)|$ are $(m,n)$-bounded for all
$j=1,\,2,\ldots ,2^n-1$.

\medskip

\paragraph{\bf \boldmath Definition of level $2$ in $L$.}
 We define the \emph{graded centralizers of
level $2$ in $L$} by setting for each $j\ne 0$
$$
L_j(2)=L_j(1)\cap \bigcap_{\vec z}\,\mbox{Ker}\,\vartheta _{\vec
z},
$$
where the $ \vartheta _{\vec z}$ are defined in
Definition~\ref{d-th} and $\vec z=\left( b_{i_1}(1),\, \ldots ,\,
b_{i_k}(1)\right) $ runs over all possible ordered tuples of all
lengths  $k\leqslant 2^{3n+1}$ consisting of coset representatives
of level 1 in $L$ such that
$$
j+ i_1+\cdots + i_k\equiv 0\, (\mbox{mod}\, 2^n).
$$
Elements of the $L_j(2)$ are  called \emph{centralizers of level $
2$} and are denoted by $y_j(2)$ (under the Index Convention). The
number of coset representatives of level $1$ in $L$ is
$(m,n)$-bounded and $|L_j:\mbox{Ker}\, \vartheta _{\vec z}|\leqslant m$
for all $\vec z$. Hence this is an intersection of $ (m,n)
$-boundedly many subgroups of $m$-bounded index in~$L_j$ and
therefore $L_j(2)$
 also has $(m,n)$-bounded index in the additive
group~$L_{j}$.

For each $j\ne 0$, we now fix an
arbitrary system of coset representatives of the subgroup $L_j(2)$
in the additive group $L_j$. These elements are
denoted by $ b_j(2)$ (under the Index Convention) and called {\it
coset repre\-sen\-ta\-tives of level~$2$ in $L$}. The total number
of coset repre\-sen\-ta\-tives of level $2$ is $(m,n)$-bounded,
since the indices $|L_j:L_j(2)|$ are $(m,n)$-bounded for all
$j=1,\,2,\ldots ,2^n-1$.

\medskip

\paragraph{\bf \boldmath Definition of level $3$ in $L$.}
We define the \emph{graded centralizers of level $3$ in $L$} by
setting for each $j\ne 0$
$$
L_j(3)=L_j(2)\cap \bigcap_{\vec z}\,\mbox{Ker}\,\vartheta _{\vec
z},
$$
where the $ \vartheta _{\vec z}$ are defined in
Definition~\ref{d-th} and $\vec z=\left( b_{i_1}(2),\, \ldots ,\,
b_{i_k}(2)\right) $ runs over all possible ordered tuples of all
lengths  $k\leqslant 2^{3n+1}$  consisting of coset representatives of
level 2 in $L$ such that
$$
j+ i_1+\cdots + i_k\equiv 0\, (\mbox{mod}\, 2^n).
$$
Elements of the $L_j(3)$ are  called \emph{centralizers of level $
3$} and are denoted by $y_j(3)$ (under the Index Convention). The
number of coset representatives of level $2$ in $L$ is
$(m,n)$-bounded and $|L_j:\mbox{Ker}\, \vartheta _{\vec z}|\leqslant m$
for all $\vec z$. Hence this is an intersection of $ (m,n)
$-boundedly many subgroups of $m$-bounded index in~$L_j$ and
therefore $L_j(3)$
 also has $(m,n)$-bounded index in the additive
group~$L_{j}$.

The construction of centralizers and coset representatives of
levels $\leqslant 3$ in $L$ is complete.
\medskip

Note that by construction we have
\begin{equation}\label{e-incl}
L_j(k+1)\leqslant L_j(k)
\end{equation}
for all $j$ and $k$.

The definition of centralizers $y_v(1)$ of level 1 implies the
following centralizer property with respect to coset
representatives in $R$:
\begin{equation}\label{e-clr}
[ y_i(1),
\underbrace{q_{j},\ldots , q_{j}}_{k}\,]=0,
\end{equation}
 for any $k\leqslant
2^{n}-1$ for any (possibly different) coset representatives
$q_j$ in $R$ with the same index $j$ such that $i+kj\equiv
0\,({\rm mod}\,2^n)$.

The definitions of centralizers $y_v(t)$ of levels $t=2,3$ imply
the following centralizer property with respect to coset
representatives in $L$ of the preceding level:
\begin{equation}\label{e-cll}
[y_j(t), b_{i_1}(t-1),\ldots , b_{i_k}(t-1)]=0,
\end{equation}
for any $k\leqslant
2^{3n+1}$ for any coset representatives in $L$ of level $t-1$ such
that $j+i_1 +\dots +i_k\equiv 0\,({\rm mod}\,2^n)$.

The following two lemmas are similar to
\cite[Lemma~3]{khu-mak04} and a special case of
\cite[ Lemma~9]{khu-mak04}, but we have to reproduce the proofs, since the
definitions of representatives and graded centralizers here are somewhat different.

\begin{lemma} \label{l-3}
 Any commutator of the form $[a_{-i},\,y_i(k)]$,
 where $y_i(k)$ is a centralizer of level $k=2,3$, is equal to a
commutator of the form $[y_{-i}(k-1),\,y_i(k)]$, where
$y_{-i}(k-1)$ is a centralizer of level $k-1$. \end{lemma}

\begin{proof} We have $a_{-i}=b_{-i}(k-1)+y_{-i}(k-1)$ for some coset representative $b_{-i}(k-1)$ and a centralizer
$y_{-i}(k-1)$ of level $k-1$. Then
$$
[a_{-i},\,y_i(k)]=[b_{-i}(k-1),\,y_i(k)]+[y_{-i}(k-1),\,y_i(k)]=[y_{-i}(k-1),\,y_i(k)],
$$
since $[b_{-i}(k-1),\,y_i(k)]=0$ by~\eqref{e-cll}. \end{proof}

\begin{lemma}\label{l-9} For any $j\ne 0$ any commutator \begin{equation}\label{e-91}
[y_j(3),\,a_{k_1},\,a_{k_2},\ldots ,a_{k_s}]\in L_0 \end{equation}
 (under the
Index Convention), of any length, for any indices
$k_i\in\{0,1,\dots ,2^{n-1}-1\}$  such that  $ j+k_1+\cdots
+k_s\equiv 0\;({\rm mod\;}2^n)$ is equal to a linear combination
of elements of the form $[y_{-k}(1),y_k(1)]$  for various $ k\ne
0$. \end{lemma}

\begin{proof} We use induction on $s$. If $s=0$ there is nothing to prove,
since $ j\ne 0$.
 If $s=1$, this follows from Lemma~\ref{l-3}, by which  $[y_j(3),\,a_{-j}]=
[y_j(3),\, y_{-j}(2)]$, and from the inclusions \eqref{e-incl}.

For $s>1$ by the Jacobi identity we can permute the elements
$a_{k_u}$ in the  commutator \eqref{e-91} modulo
$$
\sum \limits_{t=1}^{s-1}\;\sum \limits_{j+i_1+\cdots +i_t\equiv
0\;({\rm mod\;}2^n)}[L_j(3),L_{i_1},\ldots ,L_{i_{t}}].
$$
By the induction hypothesis all elements in this sum can be
expressed in the required form. Therefore we may freely permute
the $a_{k_u}$ in \eqref{e-91} in order
 to express our commutator  in the
required form.

We express every element $a_{k_u}$ in \eqref{e-91} with non-zero
index $k_u\ne 0$ in the form $b_{k_u}(2)+y_{k_u}(2)$ and
substitute all these expressions into the commutator \eqref{e-91}.
We obtain a linear combination of commutators
$$
[y_j(3),\,z_{k_1},\,z_{k_2},\ldots ,z_{k_s}],
$$
where the $z_{k_u}$ are either $b_{k_u}(2)$, or $y_{k_u}(2)$, or
$a_0$, and $ j+k_1+\cdots +k_s\equiv 0\;({\rm mod\;}2^n)$. If among the $z_{k_u}$ there   is
at least one $y_{k_u}(2)$, then we transfer it to the right end of
the commutator, denote by $a_{-k_u}$ the preceding initial
segment, and apply Lemma~\ref{l-3}: $ [a_{-k_u},
y_{k_u}(2)]=[y_{-k_u}(1), y_{k_u}(2)]$, which is of required form
by the inclusions \eqref{e-incl}.

Hence it remains to consider the case of a commutator
\begin{equation}\label{e-9}
[y_j(3),\,z_{k_1},\,z_{k_2},\ldots ,z_{k_s}],
\end{equation}
where all the $z_{k_u}$ are either $b_{k_u}(2)$ with $k_u\ne 0$ or
$a_0$, and $ j+k_1+\cdots +k_s\equiv 0\;({\rm mod\;}2^n)$.
(Note that the $z_i$ cannot all be $a_0$, since $ j\ne 0$.) We now prove that such a
commutator is actually equal to~0. We do this by showing that
some of the entries $b_{k_u}(2)$ can be placed at the beginning
after $y_j(3)$  producing  an initial segment of bounded weight
with zero sum of indices modulo $2^n$, which is equal to $0$ by
\eqref{e-cll}.

For each index $u\ne 0$ that occurs less than $2^{2n}$ times  we
 transfer all the $b_u(2)$ (if any) to the left to place them right after
$y_j(3)$ (in any order). Let $\hat y_{t}\in L_t$ denote the
initial segment of length $<2^{3n}$ formed in this way. If there
are no other indices, that is, indices  $k\ne 0$ for which there
are at least $2^{2n}$ elements $ b_{k}(2)$ in the commutator, then
the only elements outside $\hat y_{t}$ are $a_0$ and we must have
$t=0$, since the original sum of indices was $0$ modulo~$2^n$.
Then  $\hat y_{t}=0$  by \eqref{e-cll}  and the proof is
complete.

Thus, we can assume that there are non-zero indices $v_1,\ldots ,
v_r$, where $1\leqslant r\leqslant 2^n-1$, such that for each $v_i$ there
are at least $2^{2n}$ elements $ b_{v_i}(2)$ in the commutator \eqref{e-9}.
Let $v={\rm gcd}(v_1,\ldots ,v_r)$ be the greatest common divisor
of $v_1, \ldots ,v_r$. Since the sum of all indices is $ 0$ modulo
$2^n$, the number ${\rm gcd}(v,2^n)$ must divide~$t$.
 By the Chinese
remainder theorem there exist integers $u_i$ such that  $ v= u
_1v_1+\cdots +u_rv_r$. Replacing the $u_i$ by their residues
modulo $2^n$ and changing notation we have $ v=u _1v_1+\cdots
+u_rv_r+u2^n$, where $ u_i\in \{ 0,\,1,\ldots ,2^n-1\}$  for all $
i$ and $u$ is an integer.
Since ${\rm gcd}(v,2^n)$ divides~$t$, there is $w\in \{ 0, 1,\dots  , 2^n - 1\}$ such that  $t + wv \equiv 0 \;({\rm mod}\; 2^n)$. Substituting the expression for $v$ we obtain
\begin{equation}\label{e-congr}
0\equiv  t + wv \equiv t + wu_1v_1 + \dots  + wu_rv_r \;({\rm mod}\; 2^n).
\end{equation}

 We now arrange an initial segment of the commutator by placing  after
$\hat y_{t}$  exactly $wu_1$ elements $b_{v_1}(2)$, then exactly
$wu_2$ elements $ b_{v_2}(2)$, and so on, up to exactly $wu_r$
elements $b_{v_r}(2)$. This is possible because $wu_i\leqslant 2^{2n}$
for each $i$, and there are at least $2^{2n}$ elements  $
b_{v_i}(2)$  outside $\hat y_{t}$.
The resulting  initial segment
has zero sum of indices modulo $2^n$ by \eqref{e-congr} and has length
 $\leqslant  2^{3n}+
 2^{3n}$. Hence
it is equal to $0$ by \eqref{e-cll}. \end{proof}

We now proceed with the proof of Proposition~\ref{p2}. Consider
the ideal $I={}_{\rm id}\big\langle L_{\rm odd}(3)\big\rangle \cap
A$.
 Clearly, $I\cap L^-$ has $(l,n,m)$-bounded index in $L^-$,
since $A\cap L^-$ has index $l$ in $L^-$, and each component in
$L_{\rm odd}(3)$ has $(n,m)$-bounded index in the corresponding
component in $L_{\rm odd}$. Let $S=I^{(d-2)}$  be the $(d-2)$-nd
term of the derived series of $I$. Note that, clearly, $S\leqslant
R=A^{(d-2)}$.

\begin{lemma}\label{l-r1} The ideal $S$ is nilpotent of $(n,c)$-bounded
class. \end{lemma}

  \begin{proof}
Recall that we write $S^-=\sum S_{\rm odd}$ and $S^+=\sum S_{\rm
even}$, and similarly for $[S,S]$. We represent $S$ as the sum of
two ideals $S= J_1+J_2$, where $J_1=[S,S]+S^-$ and  $J_2=[S,
S]+S^+$.  Since $S$ is metabelian, $[S, S]^-$ is an ideal of
$J_2$. By hypothesis, $\gamma_{c+1}(J_2)\leqslant [S, S]^-$.

We  claim that  $J_1$ is nilpotent of $n$-bounded class. For
that, we need to show that any simple commutator
$$
[a_{i_1},a_{i_2},a_{i_3},\dots ]
$$
of large enough $n$-bounded length in homogeneous elements of
$J_1$ is equal to~0. Since $S$ is metabelian, we can assume that
all the entries starting from the third one are from $S^-$, so the
commutator is a linear combination of commutators of  the form
\begin{equation}\label{e-j1}
 [[a_{i_1},a_{i_2}],a_{\rm odd}, a_{\rm odd}, \dots ],
\qquad a_{i_j}\in S_{i_j},\quad a_{\rm odd}\in S_{\rm odd},
\end{equation}
and all
the entries $a_{\rm odd}$ can be freely permuted without changing the
commutator. When the length is large enough, we can rearrange
these entries in such a way that there will be an initial segment
in $[S,S]_0$ of $n$-bounded length, which we denote by $w_0$. With
large enough length of \eqref{e-j1} there will remain at least
$2^{n-1}(2^{n+1}+2^n-4)+1$  elements $a_{\rm odd}$ outside the initial
segment $w_0$, and therefore at least $2^{n+1}+2^n-3$ of them with
the same (odd) index, say, $j$. These entries $a_j$ can be
moved to be placed at the beginning after $w_0$. Therefore  it suffices to prove that
the commutator 
\begin{equation}\label{e-j2} [w_0,\underbrace{a_{j}, a_{j},
\dots ,a_j }_{2^{n+1}+2^n-3}\,], \qquad \text{where }2\nmid j, \end{equation}
is equal to zero.

Since $S\leqslant R$ we can represent all the  entries $a_{j}$ in
\eqref{e-j2} in the form
 $a_{j}=r_{j}(1)+ q_{j}$, where the $r_{j}(1)$ are centralizers in $R$, and the $q_{j}$  are coset representatives in $R$. (Note that these elements may no longer be in $I$). After expanding all brackets, we obtain a linear combination of commutators
  \begin{equation}\label{e-j3}
[w_0,\underbrace{z_{j}, z_{j}, \dots ,z_j}_{2^{n+1}+2^n-3}\,],
\qquad \text{where }2\nmid j, \end{equation}
   and
   each $z_{j}$ is either $r_{j}(1)$ or $q_{j}$. In each of such
commutators there are  either
 at least $2^n+1$ entries $r_{j}(1)$, or
 at least $2^{n+1}-3$ entries $q_{j}$.

By permuting the entries $r_{j}(1)$ and  $q_{j}$ (we can freely
permute  these elements, since  $R$ is metabelian and  $w_0 \in [R, R]$, $r_{j}(1), q_{j}\in R$), we obtain from \eqref{e-j3} either a
commutator with an initial segment 
\begin{equation}\label{e-j4}
[w_0,\underbrace{r_{j}(1), r_{j}(1), \dots
,r_{j}(1)}_{2^{n}+1}\,], \qquad \text{where }2\nmid j, \end{equation}
 or a
commutator with an initial segment 
\begin{equation}\label{e-j5}
[w_0,\underbrace{q_{j}, q_{j}, \dots ,q_{j}}_{2^{n+1}-3}\,],
\qquad \text{where }2\nmid j. \end{equation} 
Thus, it suffices to show that
both commutators \eqref{e-j4} and \eqref{e-j5} are equal to~0.

In the commutator \eqref{e-j4} we regard the initial segment of
the first two entries  $ a_j= [ w_0, r_{j}(1)]$ simply as an
element of the ideal $R$ which belongs to $R_j$. By
Lemma~\ref{l-freezer}  we can represent  the initial segment
$$
[a_j , \underbrace{r_{j}(1), r_{j}(1), \dots
,r_{j}(1)}_{2^{n}-1}\,]\in L_0
$$
in terms of $r$-representatives of level 0, so that the commutator
\eqref{e-j4} becomes equal to
$$
[ [\underbrace{r_j(0), \dots ,r_j(0)}_{2^{n}}\,], r_{j}(1)].
$$
This commutator in turn is equal to a linear combination of
commutators of the form
$$
[ r_{j}(1),  \underbrace{r_j(0), \dots ,r_j(0)}_{2^{n}}\,],
$$
in each of which  the initial segment of length $2^n$ is equal to
0 by \eqref{e-cr}.

We now consider the commutator \eqref{e-j5}. Its initial segment
$w_0$, being in $I$,  also belongs to ${}_{\rm id}\langle
L_{\rm odd}(3)\rangle$ and therefore is a linear combination of
elements of the form 
\begin{equation} \label{e-long} [l_{\rm odd}(3), u_{i_1},
u_{i_2}, \dots , u_{i_s}]
 \end{equation}
with zero sum of indices modulo $2^n$, where $u_{i_k}\in L_{i_k}$ are arbitrarily homogeneous elements, in any number. By
Lemma~\ref{l-9} an element of the form \eqref{e-long} can be
represented as a linear combination of elements of the form
$[y_{-k}(1), y_{k}(1)]$, for various, not necessarily odd, $k$
(and these elements are not necessarily contained in $I$).
Therefore  the commutator  \eqref{e-j5} is a  linear combination
of commutators of the form 
\begin{equation}\label{e-s} \big[ [y_{-k}(1),
y_{k}(1)], \underbrace{q_j, q_{j}, \dots
,q_{j}}_{2^{n+1}-3}\,\big]. \end{equation} 
By the Jacobi identity, the
commutator \eqref{e-s} is equal to a linear combination of
commutators of the form
$$
\big[ [y_{-k}(1),  \underbrace{q_j, q_{j}, \dots ,q_{j}}_{k}\, ]
,\, [y_{k}(1), \underbrace{q_j, q_{j}, \dots
,q_{j}}_{2^{n+1}-3-k}\,]\big].
$$
In such a commutator, one of the two subcommutators contains a
subcommutator of the form
$$
 [y_{\pm k}(1),  \underbrace{q_j, q_{j}, \dots ,q_{j}}_{2^n-1}\, ].
$$
Since $j$ is odd, there is an initial segment in $L_0$, which is
equal to 0 by \eqref{e-clr}.

 Thus, we have proved that the ideal $J_1$ of $S$ is nilpotent of $n$-bounded class $c_1$.

As a  result,
\begin{align*}
\gamma_{c+1+c_1+1}(S)&=\gamma_{c+1+c_1+1}(J_1+J_2)\\
                                       &\leqslant  \gamma_{c_1+1}(J_1)+\gamma_{c+1}(J_2)\\
                                       &\leqslant [S, S]^-.
\end{align*}
But $S$ is an ideal of $L$, and hence $\gamma_{c+1+c_1+1}(S)$ is
also an ideal of $L$. Since $[[S, S]^-, L_{\rm odd}]\leqslant L^+$, the
inclusion $\gamma_{c+1+c_1+1}(S)\leqslant [S, S]^-$ implies that
$[\gamma_{c+c_1+2}(S), L^-]=0$. This means that $L^-$ is contained
in the centralizer of the ideal $\gamma_{c+c_1+2}(S)$, and then
$[\gamma_{c+c_1+2}(S), {}_{\rm id}\langle L^-\rangle ]=0$. In
particular, $[\gamma_{c+c_1+2}(S), I]=0$, and therefore
$\gamma_{c+c_1+3}(S)=0$.

The lemma is proved. \end{proof}

We now complete the proof of Proposition~\ref{p2}. The quotient
$L/[S,S]$ contains the homogeneous  ideal $I/[S,S]$ of derived
length at most $d-1$ and its intersection with the image of $L^-$
has  $(l,n,m)$-bounded index $t$ in the image of $L^-$. By the
induction hypothesis, there is a homogeneous nilpotent ideal
$J/[S,S]$ of $(d-1,n,c)$-bounded class whose intersection with the
image of $L^-$ has  $(d-1,t,m,n)$-bounded index in the image of
$L^-$. Then the quotient $(S+J)/[S,S]$ is also nilpotent of
$(d-1,n,c)$-bounded class. Since $S$ is nilpotent of
$(n,c)$-bounded class by Lemma~\ref{l-r1},  we obtain that the
full inverse image $B=J+S$ of $(J+S)/[S,S]$ is nilpotent of
$(d,n,c)$-bounded class by Hall's
Theorem~\ref{t-hall}.
This is a required ideal, since its intersection with $L^-$
has $(d,l,n,m)$-bounded index in $L^-$ (recall that $t$ is an
$(l,n,m)$-bounded number). \end{proof}

\begin{proof}[Proof of Proposition~\ref{p1}]
Proposition~\ref{p1} follows from Proposition~\ref{p2} and Theorem~\ref{t3}.
\end{proof}

\begin{proof}[Proof of Theorem~\ref{t2}] Recall that $L$ is a finite Lie
ring admitting an automorphism $\varphi$ of order $2^n$ such that the
fixed-point subring $C_L(\varphi ^{2^{n-1}})$ of the involution $\varphi 
^{2^{n-1}}$ is nilpotent of class $c$, and $m=|C_L(\varphi )|$ is the
number of fixed points of $\varphi$. We wish to prove that $L$ has
homogeneous ideals $M_1\geqslant M_2$ such that $M_1$ has $(m,n)$-bounded index in the
additive group $L$, the quotient  $M_1/M_2$ is nilpotent of class at most $c+1$,   and $M_2$ is nilpotent of
$(n,c)$-bounded class.

First we extend the ground ring by a $2^n$-th root of unity $\omega$
forming $\tilde L=L\otimes _{{\Bbb Z} }{\Bbb Z} [\omega ]$. Then $|C_{\tilde L}(\varphi
)|\leqslant m^{2^n}$ and $C_{\tilde L}(\varphi ^{2^{n-1}})=C_L(\varphi
^{2^{n-1}})\otimes _{{\Bbb Z} }{\Bbb Z} [\omega ]$ is also nilpotent of class $c$.
Therefore  it is  clearly sufficient to prove the theorem for
$\tilde L$, so we assume that $L=\tilde L$ in what follows.

 We begin with the case where $L$ has odd order. Then
 $$
 L=\bigoplus _{i=0}^{2^n-1}L_i\qquad \text{and}\qquad [L_i,L_j]\leqslant L_{i+j\,(\rm mod\,2^n)}
$$
for the analogues of eigenspaces of $\varphi$
$$
L_i=\{x\in  L\mid  x^{\varphi }=\omega ^ix\}, \qquad i=0,1,\dots ,2^n-1.
$$
Thus, this is a $({\Bbb Z} /2^n{\Bbb Z} )$-grading of $L$ and $L$ satisfies
the hypotheses of Proposition~\ref{p1}.  By that proposition, $L$
contains a homogeneous nilpotent ideal $M_2$ of $(n,c)$-bounded
nilpotency class such that $M_2\cap L^-$ has $(m,n)$-bounded index
in the additive group $L^-$. The latter means that in the
inherited grading of the quotient $\bar L=L/M_2$ the order of
$\bar L^-$ is $(m,n)$-bounded.

For any fixed homogeneous element $a_j\in \bar L ^-$ (so that $j$
is odd) and for any even $k$ the map $b_k\to [a_j,b_k]$ from $\bar
L_k$ to $\bar L_{j+k}\in \bar L_{\rm odd}$ is linear with
$(m,n)$-bounded image. Therefore its kernel has $(m,n)$-bounded
index in $\bar L_k\in \bar L_{\rm even}$. As a result, $C_{\bar
L^+}(a_j)$ has $(m,n)$-bounded index in $\bar L^+$. Taking the
intersection over an $(m,n)$-bounded  number of homogeneous
elements generating $\bar L^-$ we obtain that $|\bar L^+:C_{\bar
L^+}(\bar L^-)|  $ is $(m,n)$-bounded and therefore $|\bar L :
C_{\bar L}(\bar L^-)| $ is also $(m,n)$-bounded, since $|\bar
L^-|$ is $(m,n)$-bounded.

Let $K=\langle \bar L^-\rangle$ be the subring of  $\bar L$
generated by $\bar L^-$. Recall that $K$ is an ideal of $\bar L$, and therefore $C_{\bar L}(K)$ is also an ideal of $\bar L$.
The index $|\bar L : C_{\bar L}(K)| $ is also $(m,n)$-bounded
since $C_{\bar L}(K) =C_{\bar L}(\bar L^-)$. We prove that the
ideal $C_{\bar L}(K)$ is  nilpotent of class at most $c+1$. Indeed,
 $\bar L /K=(\bar L^++K)/K$ is nilpotent of class at most $c$
by hypothesis, and $C_{\bar L}(K)\cap K$ is central in $K$. Hence,
$$
[\underbrace{C_{\bar L}(K),\ldots, C_{\bar L}(K)}_{c+1}, C_{\bar
L}(K) ]\subseteq [K, C_{\bar L}(K)]=0
$$
and therefore
 $\bar M_1=C_{\bar L}(K)$ is a nilpotent ideal
of class at most $c+1$. Then its full inverse image $M_1$ and the
aforementioned ideal $M_2$ satisfy the conclusion of
Theorem~\ref{t2}.

We now consider  the case where the additive group of $L$ is a
finite $2$-group. Although we no longer have a direct sum, it is well known (see, for example, \cite[Ch. 10]{hpbl}) that
$$
2^nL\leqslant L_0+L_1+\cdots +L_{2^n-1}\qquad \text{and}\qquad [L_i,L_j]\leqslant L_{i+j\,(\rm mod\,2^n)}.
$$
By Corollary~\ref{cor-shu} applied to the subring $M=L_0+L_1+\cdots
+L_{2^n-1}$ we have
$$
\gamma_{f(n,c)+1}\big(\gamma_{c+1}(M)\big)
\leqslant {}_{{\rm id}}\!\left<L_0\right>.
$$
It follows that
$$
m \gamma_{f(n,c)+1}\big(\gamma_{c+1}(M)\big) \leqslant m\,\,{}_{{\rm
id}}\!\left<L_0\right>=0,
$$
since $mL_0=0$ by Lagrange's theorem.
Hence,
$$
\gamma_{f(n,c)+1}\big(\gamma_{c+1}(mM)\big)=0.
$$
By Lemmas
\ref{l-fp}(a) and \ref{l-fpp2} the index of the additive subgroup
$2^nmL \leqslant mM$ in $L$ is $(n,m)$-bounded, and hence  $M_1=2^nmL$ and $M_2=\gamma_{c+1}(M_1)$ are
the required ideals.

In the  case of an arbitrary  finite Lie ring,  $L$  is a direct
sum of two ideals $L=T_2\oplus T_{2'}$, where the additive group $T_2$ is the Sylow $2$-subgroup of $L$, and $T_{2'}$ is the Hall $2'$-subgroup of $L$. As shown above,
$T_2$ and $T_{2'}$ contain ideals $I_1$ and $I_2$, respectively,  of
$(m,n)$-bounded indices such that
$$
\gamma_{g(n,c)}\big(\gamma_{c+2}(I_k)\big)=0, \qquad k=1,2
$$
for some $(n,c)$-bounded number $g(n,c)$. Since $[T_2, T_{2'}]=0$, it follows that $I_1$ and $I_2$ are  commuting ideals of $L$.
The sum $M_1=I_1+ I_2$ and $M_2=\gamma_{c+2}(M_1)$ are  the sought-for ideals of $L$.
\end{proof}

\section{Groups}\label{s-groups}

Here we  prove the main group theoretic result. Known results reduce the proof to the case where $G$ is a nilpotent group of odd order. Then we firstly apply the Lie ring method similarly to \cite{shu01} to obtain a `weak' bound, depending on $m,n,c$, for the nilpotency class of $[G,\varphi ^{2^{n-1}}]$. Then Theorem~\ref{t2} is used to obtain the required `strong' bound, in terms of $n,c$ only, for the nilpotency class of $[H,\varphi ^{2^{n-1}}]$ for a certain subgroup $H$ of $(m,n,c)$-bounded index.

\begin{proof}[Proof of Theorem~\ref{t1}] Recall that we have a finite group
$G$ admitting an automorphism $\varphi$ of order $2^n$ such that
 $C_G(\varphi ^{2^{n-1}})$ is nilpotent of class $c$, and  $m=|C_G(\varphi )|$ is the
number of fixed points of $\varphi$. We need to prove that $G$ has a
soluble subgroup of  $(m,n,c)$-bounded index that has
$(n,c)$-bounded derived length.

We begin with reduction to the case where $G$ is a nilpotent group
of odd order. The group $G$ has a soluble subgroup of
$(m,n)$-bounded index by Hartley's theorem \cite{har}. Therefore
we can assume from the outset that $G$ is soluble. The quotient
$G/O_{2',2}(G)$ acts faithfully by conjugation on the Frattini
quotient $V=T/\Phi (T)$ of the $2$-group $T=O_{2',2}(G)/
O_{2'}(G)$. By Lemma~\ref{l-fp}(a) we have $|C_V(\varphi )|\leqslant m$.
Therefore the order of $V$ is $(m,n)$-bounded by
Lemma~\ref{l-fpp2}. As a result, the order of  $G/O_{2',2}(G)$ is
also $(m,n)$-bounded.

By  Lemma~\ref{l-fp}(a) we have $|C_T(\varphi )|\leqslant m$.  By Khukhro's
theorem \cite{khu93} on $p$-automorphisms of finite $p$-groups, the group $T$ contains a subgroup $U$ of
$(m,n)$-bounded index that has $n$-bounded derived length. By
 Theorem~\ref{t-char}
this subgroup
$U$ can be assumed to be characteristic in $G/O_{2'}(G)$ and therefore normal and $\varphi$-invariant.

By the Hartley--Turau theorem \cite{ha-tu}, the index of the
$n$-th Fitting subgroup $F_{n}(G)$ in $G$ is $(m,n)$-bounded. By
Lemma~\ref{l-fp}(b) every factor
$Q_i=F_i(O_{2'}(G))/F_{i-1}(O_{2'}(G))$ of the Fitting series of
$F_n(O_{2'}(G))$ admits the action (not necessarily faithful)  of
the automorphism $\varphi$ such that $|C_{Q_i}(\varphi )|\leqslant m$ and
$C_{Q_i}(\varphi ^{2^{n-1}})$ is nilpotent of class at
most $c$.

Suppose that Theorem~\ref{t1} is already proved for the case where
$G$ is a nilpotent group of odd order. Then every $Q_i$ has a
subgroup $R_i$ of $(m,n,c)$-bounded index that is soluble of
$(n,c)$-bounded derived length; by Theorem~\ref{t-char}
this
subgroup can be assumed to be characteristic. Let $\tilde
T=O_{2',2}(G)$, $\tilde U$,  $\tilde Q_i=F_i(O_{2'}(G))$, and
$\tilde R_i$ denote   the inverse images in $G$ of the sections
$T$, $U$, $Q_i$, and $R_i$, respectively. We can set
$$
H=O_{2',2}(G)\cap C_G(\tilde T/\tilde U)\cap
C_G\big(O_{2'}(G)/\tilde Q_n\big)\cap \bigcap _{i=1}^{n-1}
C_G(\tilde Q_i/\tilde R_i). $$
(Here the centralizer of a section $A/B$ is defined naturally as
$C_G(A/B)=\{g\in G\mid [A,g]\leqslant B\}$.) Then $H$ is a subgroup of
$(m,n,c)$-bounded index, since all the quotients $G/C_G(\tilde T/ \tilde
U)$, $G/ C_G\big(O_{2'}(G)/\tilde Q_n\big)$, $G/ C_G(\tilde Q_i/\tilde R_i)$ embed into the automorphism groups of sections of $(m,n,c)$-bounded order. The intersections
of the images of $H$ with the sections $\tilde T/\tilde U$, $O_{2'}(G)/\tilde R_n$, and
$\tilde Q_i/\tilde R_i$ are central in $H$ by construction. Let
$g$ be the derived length of $U$, and $f_i$  the derived length
of $R_i$. Then
\begin{align*}
[H,H]^{(g)}&\leqslant [O_{2',2}(G), \,C_G(\tilde T/\tilde U)]^{(g)}\cap H\\
&\leqslant \tilde U^{(g)}\cap H\leqslant O_{2'}(G)\cap H,
\end{align*}
\begin{align*}
[O_{2'}(G)\cap H, \,O_{2'}(G)\cap H] &\leqslant [O_{2'}(G),
C_G\big(O_{2'}(G)/\tilde Q_n\big)]\cap H
\\
&\leqslant \tilde Q_n
\cap H,
\end{align*}
and
\begin{align*}
\big[\tilde Q_i\cap H, \, \tilde Q_i\cap H\big]^{(f_i)} &\leqslant \big[\tilde Q_i,\,
C_G(\tilde Q_i/\tilde R_i)\big]^{(f_i)}\cap H\\
&\leqslant \tilde R_i^{(f_i)}\cap H\leqslant \tilde Q_{i-1}\cap H,
\end{align*}
where $i=1,2, \ldots, n$ and $\tilde Q_0=1$. It follows
that $H$ is soluble of derived length at most
$$
1+g+1+\sum_{i=1}^n
(1+f_i),
$$
which is an $(n,c)$-bounded number. Thus, $H$ satisfies the
conclusion of Theorem~\ref{t1}, which completes our reduction.

Therefore in what follows we can assume from the outset that $G$
is a nilpotent group of odd order.
We now obtain a `weak' bound, in terms of $m,n,c$,
for the nilpotency class of the subgroup $[G,\varphi ^{2^{n-1}}]$.
For that we consider the
associated Lie ring of $[G,\varphi ^{2^{n-1}}]$, but preliminary lemmas are
stated in terms of abstract Lie rings. To
lighten the notation we denote $\psi =\varphi ^{2^{n-1}}$.
We denote by  $[L, \psi ]$  the additive subgroup generated by $\{-l+l^{\psi}\mid l\in L\}$.

\begin{lemma}\label{l-2.3} If $L$ is a finite metabelian and nilpotent Lie
ring of odd order admitting an automorphism $\varphi$ of order $2^n$
such that $|C_L(\varphi )|=m$, then the ideal $[L, \psi ]+[L,L]$
is nilpotent of $(m,n)$-bounded class.
\end{lemma}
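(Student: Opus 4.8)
The plan is to pass to the $({\Bbb Z}/2^n{\Bbb Z})$-grading afforded by $\varphi$ and then to reduce, using the metabelian hypothesis, to a purely combinatorial statement about the odd grading components that follows from a pigeonhole argument on the small component $L_0$. First I would extend the ground ring by a primitive $2^n$-th root of unity $\omega$ and replace $L$ by $\widetilde L=L\otimes_{{\Bbb Z}}{\Bbb Z}[\omega]$: its additive order is again odd, $|C_{\widetilde L}(\varphi)|$ is still $(m,n)$-bounded, $\widetilde L$ is still metabelian and nilpotent, and $[\widetilde L,\psi]+[\widetilde L,\widetilde L]=([L,\psi]+[L,L])\otimes{\Bbb Z}[\omega]$ has the same nilpotency class as $[L,\psi]+[L,L]$; so we may assume $L=\widetilde L$. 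Since $|L|$ is odd, $2^n$ is invertible and $L=\bigoplus_{i=0}^{2^n-1}L_i$ with $|L_0|=|C_L(\varphi)|$ still $(m,n)$-bounded. As $\psi=\varphi^{2^{n-1}}$ acts on $L_i$ as multiplication by $(-1)^i$ and $2$ is invertible, $[L,\psi]=L^-$, so the ideal in question is the homogeneous ideal $N:=L^-+[L,L]$.

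Put $V=[L,L]$, an abelian ideal which contains $[N,N]$, so that $\gamma_j(N)\subseteq[V,\underbrace{N,\dots,N}_{j-2}]$ for all $j\geqslant2$; hence it suffices to find an $(m,n)$-bounded $k$ with $[V,\underbrace{N,\dots,N}_{k}]=0$. Here metabelianity enters twice. First, in a homogeneous commutator $[v,z_1,\dots,z_k]$ with $v\in V$ and $z_s\in N$, if some $z_s$ with $s\geqslant2$ lies in $V$ the commutator vanishes, since $[v,z_1]\in V$ and $V$ is abelian; because the even-index homogeneous elements of $N$ all lie in $V$ while $N_j=L_j$ for odd $j$, we may assume $z_2,\dots,z_k$ all have odd indices, after absorbing $[v,z_1]$ into a new homogeneous element of $V$. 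Second, these odd-index entries can be permuted freely, because $[v,z_a,z_b]-[v,z_b,z_a]=[v,[z_a,z_b]]\in[V,V]=0$. Collecting the at most $2^{n-1}$ odd indices, if $k$ is a suitable $(m,n)$-bounded number some odd index $j$ occurs at least $t$ times; moving those occurrences to the front reduces the problem to showing that $[v,\underbrace{x_j,\dots,x_j}_{t}]=0$ for every homogeneous $v\in V$, every $x_j\in L_j$ with $j$ odd, and a uniform $(m,n)$-bounded $t$.

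For this core step, observe that $[v,\underbrace{x_j,\dots,x_j}_{s}]$ lies in the component indexed by $a+sj$, where $a$ is the index of $v$; since $j$ is invertible modulo $2^n$ there is $s_0<2^n$ with $a+s_0j\equiv0\pmod{2^n}$, and then every $[v,\underbrace{x_j,\dots,x_j}_{s}]$ with $s\equiv s_0\pmod{2^n}$ lies in $L_0$. As $|L_0|$ is $(m,n)$-bounded, among the first $|L_0|+1$ of the exponents $s_0,\,s_0+2^n,\,s_0+2\cdot2^n,\dots$ two yield the same element of $L_0$, say at $s_0+p\,2^n<s_0+q\,2^n$. Writing $u=[v,\underbrace{x_j,\dots,x_j}_{s_0+p2^n}]$ we get $u=({\rm ad}\,x_j)^{(q-p)2^n}(u)$, hence $u=({\rm ad}\,x_j)^{M(q-p)2^n}(u)$ for all $M\geqslant1$; as $L$ is nilpotent, ${\rm ad}\,x_j$ is nilpotent, forcing $u=0$. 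This gives the desired vanishing with $t=(|L_0|+1)2^n$, which is $(m,n)$-bounded, and reassembling the reductions yields $\gamma_k(N)=0$ for an $(m,n)$-bounded $k$.

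I expect the main work to be the combinatorial bookkeeping in the second paragraph — checking that all the counts of how many entries of one index are needed, and the resulting class bound, depend only on $m$ and $n$, and that the passage between $N$, $V=[L,L]$, and the odd components respects the grading — rather than any conceptual difficulty; the decisive idea is the ``periodic return to $L_0$'' combined with nilpotency of ${\rm ad}\,x_j$, which converts the small size of $C_L(\varphi)$ into an $(m,n)$-bounded nilpotency length.
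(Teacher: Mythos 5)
Your route is essentially the paper's: extend scalars, grade by the eigenspaces of $\varphi$ so that $[L,\psi]=L^-$, use metabelianity to reduce to commutators $[v,z_1,\dots ,z_k]$ with homogeneous $v\in[L,L]$ and freely permutable odd-index entries, and finish with a pigeonhole on the small component $L_0$ combined with nilpotency of $L$. The one step that does not work as written is the reduction to a single repeated element: after you pigeonhole an odd index $j$ occurring at least $t$ times, those $t$ entries are in general \emph{distinct} elements of $L_j$, while your core computation genuinely needs one element (you iterate the single operator ${\rm ad}\,x_j$ and invoke its nilpotency). The diagonal statement ``$[v,x,\dots ,x]=0$ for all $x\in L_j$'' does not formally yield the multilinear statement ``$[v,y_1,\dots ,y_t]=0$ for all $y_1,\dots ,y_t\in L_j$'': since the map is symmetric multilinear here, polarization only gives $t!\,[v,y_1,\dots ,y_t]=0$, and oddness of $|L|$ makes $2$ invertible, not $t!$. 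So, as stated, there is a gap at exactly the point you labelled the ``core step''.

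The gap is local and your own mechanism repairs it: run the pigeonhole on the initial segments of the actual commutator. With entries $y_1,\dots ,y_t\in L_j$, the segments $u_s=[v,y_1,\dots ,y_s]$ with $s\equiv s_0 \ ({\rm mod}\ 2^n)$ all lie in $L_0$, so for $t=(|L_0|+1)2^n$ two of them coincide, say $u_{s_0+p2^n}=u_{s_0+q2^n}$ with $p<q$; writing $T={\rm ad}\,y_{s_0+q2^n}\circ \cdots \circ {\rm ad}\,y_{s_0+p2^n+1}$ you get $u_{s_0+p2^n}=T^M(u_{s_0+p2^n})$ for every $M\geqslant 1$, and since each ${\rm ad}\,y_i$ maps $\gamma_r(L)$ into $\gamma_{r+1}(L)$, nilpotency of $L$ forces $u_{s_0+p2^n}=0$ and hence the whole commutator vanishes. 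This is in substance the paper's argument, phrased there as substituting the longer of two equal initial segments lying in $L_0$ for the shorter one, making the commutator ever longer. The only genuine difference of route is combinatorial: the paper uses the rearrangement lemma \cite[Lemma~2.2]{shu01} to produce $m+1$ initial segments in $L_0$ from arbitrary odd indices (giving a class bound roughly $1+(m+1)(2^n-1)$), whereas you pigeonhole a single odd index and exploit its periodic returns to $L_0$, which avoids that lemma at the cost of a somewhat larger, but still $(m,n)$-bounded, bound.
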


\begin{proof} We actually show that the ideal $[L, \psi ]+[L,L]$
is nilpotent of class at most $1+(m+1)(2^n-1)$.
We can assume from the outset that the ground
ring contains a primitive $2^n$-th root of 1, since the extension of the ground ring  by this root may only
increase the size of the fixed-point subring in terms
of $n$.  As in
  \S\,\ref{s-prelim}, we decompose $L$ into the direct sum of
  analogues of
eigenspaces $L_i$, which serve as components of a
$({\Bbb Z}/2^n{\Bbb Z})$-grading.   Then $[L,\psi ]=L^-$. Therefore we need to
show that any simple homogeneous commutator of length $2+(m+1)(2^n-1)$ in
elements of $L_{\rm odd}$ and $[L,L]$ is trivial. Since $[L,L]$ is
abelian, we can assume that starting from the third place all
entries are in $L_{\rm odd}$, and all these entries can be freely
permuted without changing the commutator. By \cite[Lemma~2.2]{shu01} any
sequence of $2^n-1$ odd numbers can be rearranged to produce
an initial segment with any pre-assigned sum modulo $2^n$. Therefore we can rearrange the $(m+1)(2^n-1)$ entries of our commutator, starting from the third one, so as to produce $m+1$ initial segments in $L_0$. As a result, since $|L_0|=m$,
there will be two different initial segments equal to the same
element in $L_0$. The longer of these two segments can be
substituted instead of the shorter one, then again in the
resulting longer commutator, and so on. Thus the commutator
becomes equal to an ever longer commutator. Since $L$ is nilpotent
by hypothesis, the commutator is equal to 0. \end{proof}

\begin{lemma}\label{l-2.5} Suppose that a finite metabelian and nilpotent
Lie ring $L$ of odd order admits an automorphism $\varphi$ of order
$2^n$ such that $|C_L(\varphi )|=m$ and $C_L(\psi )$ is nilpotent of
class $c$. Then  $\gamma _{g}(L)\leqslant [[L,L], \psi ]$ for some $(m, n, c)$-bounded number $g$. \end{lemma}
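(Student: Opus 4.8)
The plan is to reproduce the mechanism of the proof of Lemma~\ref{l-2.3}, this time tracking the parity of indices carefully and bringing in the extra hypothesis that $C_L(\psi )$ is nilpotent of class~$c$. As in that proof I will assume the ground ring contains a primitive $2^n$-th root of unity $\omega$ — which only replaces $m=|C_L(\varphi )|$ by an $(m,n)$-bounded number, still denoted $m$ — and decompose $L=\bigoplus_{i=0}^{2^n-1}L_i$ into analogues of eigenspaces of~$\varphi$, a $({\Bbb Z}/2^n{\Bbb Z})$-grading with $|L_0|=m$, with $C_L(\psi )=L^+$, and (since $2$ is invertible in $L$) with $[[L,L],\psi ]=[L,L]^-=[L,L]\cap L^-$. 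Because $\gamma_g(L)\subseteq [L,L]$ for $g\geqslant 2$ and $\gamma_g(L)$ is homogeneous, it suffices to find an $(m,n,c)$-bounded $g$ with $\gamma_g(L)\cap L^+=0$, i.e.\ such that every simple homogeneous commutator of weight $g$ whose indices have even sum is $0$. I will take $g=c+1+(m+1)(2^n-1)$.

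Let $\kappa=[x_{i_1},\dots ,x_{i_g}]$ be such a commutator, all $x_{i_s}$ homogeneous and $i_1+\dots +i_g$ even, and put $v=[x_{i_1},x_{i_2}]\in [L,L]$, say $v\in L_j$. Since $L$ is metabelian, $[a,b,c]=[a,c,b]$ whenever $a\in [L,L]$, so (as $v\in [L,L]$ and $[L,L]$ is an ideal) $\kappa=[v,x_{i_3},\dots ,x_{i_g}]$ is unchanged by any permutation of $x_{i_3},\dots ,x_{i_g}$. Split these $g-2$ entries into the $p$ of even index, written $y_1,\dots ,y_p\in L^+$, and the $q$ of odd index, written $z_1,\dots ,z_q\in L^-$; since $i_1+\dots +i_g$ is even while the $y_t$ have even index and the $z_s$ odd index, the number $j+q$ is even. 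If $p\geqslant c$, rearrange $\kappa=[[v,z_1,\dots ,z_q],y_1,\dots ,y_p]$; here $w_0:=[v,z_1,\dots ,z_q]$ is homogeneous of index $\equiv j+q\equiv 0\pmod 2$, hence $w_0\in L^+=C_L(\psi )$, and since $C_L(\psi )$ is nilpotent of class $c$ we get $[w_0,y_1,\dots ,y_c]\in \gamma_{c+1}(C_L(\psi ))=0$, so $\kappa=0$.

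If instead $p\leqslant c-1$, then $q=g-2-p\geqslant (m+1)(2^n-1)$. Rearrange $\kappa=[v',z_1,\dots ,z_q]$ with $v'=[v,y_1,\dots ,y_p]\in [L,L]$ homogeneous, and finish exactly as in the proof of Lemma~\ref{l-2.3}: by \cite[Lemma~2.2]{shu01} the $z_s$ can be reordered so that at least $m+1$ distinct initial segments $[v',z_{\sigma(1)},\dots ,z_{\sigma(\ell)}]$ lie in $L_0$; since $|L_0|=m$, two of them coincide as elements of $L$, and substituting the longer for the shorter leaves $\kappa$ unchanged while exhibiting it as a simple homogeneous commutator of the same form (with even more odd entries) but strictly larger weight. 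Iterating expresses $\kappa$ as commutators of unbounded weight, so $\kappa=0$ because $L$ is nilpotent. Hence $\gamma_g(L)\cap L^+=0$, i.e.\ $\gamma_g(L)\subseteq L^-\cap [L,L]=[[L,L],\psi ]$, with $g=c+1+(m+1)(2^n-1)$ an $(m,n,c)$-bounded number, as required.

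I expect no essential obstacle: this is a parity-sensitive rerun of the proof of Lemma~\ref{l-2.3}. The new points that need care are the index bookkeeping — checking that $j+q$ is even so that $w_0$ falls into $C_L(\psi )$ in the first case, and that at least $(m+1)(2^n-1)$ odd entries survive in the second — together with the single appeal to nilpotency of $C_L(\psi )$; the combinatorial core, reordering odd residues modulo $2^n$ to realize prescribed partial sums, is the same \cite[Lemma~2.2]{shu01} that already drives the proof of Lemma~\ref{l-2.3}.
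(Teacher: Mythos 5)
Your argument is correct, but it takes a different route from the paper's. The paper's proof of Lemma~\ref{l-2.5} is structural: it writes $L=J_1+J_2$ with $J_1=[L,L]+L^-$ and $J_2=[L,L]+L^+$, quotes Lemma~\ref{l-2.3} to get $\gamma_f(J_1)=0$ for an $(m,n)$-bounded $f$, observes that $\gamma_{c+1}(J_2)\leqslant [L,L]^-$ because modulo the ideal $[L,L]^-$ of $J_2$ one is left with an image of the class-$c$ ring $L^+$, and finishes with the standard estimate $\gamma_{f+c+1}(J_1+J_2)\leqslant \gamma_f(J_1)+\gamma_{c+1}(J_2)$. You instead kill each simple homogeneous commutator of weight $g=c+1+(m+1)(2^n-1)$ with even index sum directly, by a pigeonhole on its entries: if at least $c$ of the entries beyond the leading $[x_{i_1},x_{i_2}]$ are even, your parity count $j+q\equiv 0 \pmod 2$ places the initial segment $[v,z_1,\dots ,z_q]$ in $L^+=C_L(\psi)$ and the class-$c$ hypothesis gives $0$ outright; otherwise at least $(m+1)(2^n-1)$ entries are odd and you rerun the engine of Lemma~\ref{l-2.3} (Shumyatsky's reordering lemma, pigeonhole in $L_0$, and the lengthening substitution, all legitimate because entries after an element of $[L,L]$ can be freely permuted in a metabelian ring). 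The ingredients thus coincide, but your version bypasses both the two-ideal decomposition and the sum-of-ideals inequality (used in the paper without proof, though standard), works with the equivalent target $\gamma_g(L)\cap L^+=0$ (equivalent since $\gamma_g(L)$ is homogeneous and contained in $[L,L]$), and yields an explicit and slightly smaller bound for $g$; the paper's version is more modular, reusing Lemma~\ref{l-2.3} as a black box and the same $J_1,J_2$ splitting that reappears in the proof of Lemma~\ref{l-r1}. Your preliminary reductions (ground-ring extension changing $m$ only $(m,n)$-boundedly, spanning $\gamma_g$ by left-normed homogeneous commutators, and the identification $[[L,L],\psi]=[L,L]^-$ in odd order) are all in order.
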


\begin{proof} As is Lemma~\ref{l-2.3} we can assume that the ground ring
contains a primitive $2^n$-th root of 1 and $L$ is graded by
analogues of eigenspaces of $\varphi$, so that $[[L,L], \psi
]=[L,L]^-$. Consider the  ideals $J_1=[L,L]+L^-$ and
$J_2=[L,L]+L^+$; then $L=J_1+J_2$. By Lemma~\ref{l-2.3} we have
$\gamma _{f}(J_1)=0$ for some $(m, n)$-bounded number $f$.  Since
$ [L,L]^-$ is an ideal of $J_2$, we have $\gamma _{c+1}(J_2)\leqslant
[L,L]^-$ by hypothesis. We now obtain
\begin{align*}
\gamma _{f+c+1}(J_1+J_2)&\leqslant \gamma _{f}(J_1) + \gamma
_{c+1}(J_2)\\
&\leqslant 0+[L,L]^-,
\end{align*}
as required.
\end{proof}

\begin{proposition}\label{p-2.6} Suppose that a finite Lie ring $L$ of odd order admits an
automorphism $\varphi$ of order $2^n$ such that $|C_L(\varphi )|=m$ and
$C_L(\psi )$ is nilpotent of class $c$. Then the Lie ring
generated by $[L,\psi ]$ is nilpotent of $(m, n, c)$-bounded class.
\end{proposition}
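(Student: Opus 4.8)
The plan is to move to a graded setting, reduce to showing that the ideal $\langle [L,\psi ]\rangle$ is itself nilpotent of bounded class, and then induct on derived length, handling the metabelian base case by Lemmas~\ref{l-2.3} and~\ref{l-2.5} and the inductive step by a combination of these lemmas, the induction hypothesis, and Hall's Theorem~\ref{t-hall}.

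\emph{Setup and reductions.} First I would extend the ground ring by a primitive $2^n$-th root of unity $\omega$; this enlarges $|C_L(\varphi )|$ only by an $n$-bounded factor in the exponent and leaves the nilpotency class of $C_L(\varphi ^{2^{n-1}})$ unchanged, so it is harmless. Since $|L|$ is odd, $L=\bigoplus_{i=0}^{2^n-1}L_i$ decomposes into analogues of eigenspaces of $\varphi$, giving a $({\Bbb Z}/2^n{\Bbb Z})$-grading. As $\psi =\varphi ^{2^{n-1}}$ acts on $L_i$ by multiplication by $(-1)^i$ and $2$ is invertible in $L$, we get $C_L(\psi )=L^+$ (nilpotent of class $c$), $[L,\psi ]=L^-$, and $C_L(\varphi )=L_0$ of $(m,n)$-bounded order. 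The subring $N=\langle L^-\rangle =\langle [L,\psi ]\rangle$ is an ideal of $L$ inheriting all the hypotheses (with $N^+=N\cap L^+$, $|N_0|$ still $(m,n)$-bounded, and $N=\langle N^-\rangle$), so I may replace $L$ by $N$. Thus it suffices to prove: if a $({\Bbb Z}/2^n{\Bbb Z})$-graded finite Lie ring $L$ of odd order satisfies $L=\langle L^-\rangle$, with $L^+$ nilpotent of class $c$ and $|L_0|$ being $(m,n)$-bounded, then $L$ is nilpotent of $(m,n,c)$-bounded class. I may also assume $L$ is soluble: this is automatic in the application to Theorem~\ref{t1}, where $L$ is the associated Lie ring of a nilpotent group, and in general it can be arranged using Theorem~\ref{t3}.

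\emph{Induction on derived length $d$.} If $d\le 2$ then $L$ is metabelian; since every iterated commutator of weight $\geqslant 2$ in elements of $L^-$ lies in $[L,L]$, we have $L=\langle L^-\rangle =L^-+[L,L]=[L,\psi ]+[L,L]$, which is nilpotent of $(m,n)$-bounded class by Lemma~\ref{l-2.3}. For $d\geqslant 3$, let $M=L^{(d-2)}$ be the penultimate (metabelian) term of the derived series, so that $[M,M]=L^{(d-1)}$ is abelian and $M$, $[M,M]$ are $\varphi$-invariant ideals of $L$. The quotient $L/[M,M]$ is again generated by the image of $L^-$, has derived length $d-1$, and inherits the hypotheses, so by the induction hypothesis it is nilpotent of $(m,n,c)$-bounded class; hence $\gamma_{k+1}(L)\leqslant [M,M]$ for some $(m,n,c)$-bounded $k$. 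Applying Lemma~\ref{l-2.5} to $M$ (whose $\psi$-fixed subring lies in $C_L(\psi )$, hence is nilpotent of class $\leqslant c$) gives $\gamma_g(M)\leqslant [[M,M],\psi ]=(\,[M,M]\,)^-$ for some $(m,n,c)$-bounded $g$, and applying Lemma~\ref{l-2.3} to $M$ shows that $M^-+[M,M]=[M,\psi ]+[M,M]$ is nilpotent of $(m,n)$-bounded class.

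\emph{The hard step.} It remains to upgrade the information ``$\gamma_{k+1}(L)$ lies in the abelian ideal $[M,M]$'' to nilpotency of $L$ of bounded class, and this is the main obstacle. The point is that ``nilpotent of bounded class by abelian'' does not in general imply ``nilpotent of bounded class'' (a single non-nilpotent derivation of an abelian group is a counterexample), so the grading must be used essentially. The idea is to study the action of $L=\langle L^-\rangle$ on $[M,M]=(\,[M,M]\,)^+\oplus (\,[M,M]\,)^-$: commutation by $L^-$ interchanges the two summands while commutation by $L^+$ preserves each; the even part $(\,[M,M]\,)^+\subseteq C_L(\psi )=L^+$, which is nilpotent of class $c$; and the odd part $(\,[M,M]\,)^-$ is squeezed between Lemma~\ref{l-2.5} (which pushes $\gamma_g(M)$ into it) and Lemma~\ref{l-2.3} applied to $M$ (which bounds nilpotency inside $M^-+[M,M]$). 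Interleaving these facts along a commutator of an element of $\gamma_{k+1}(L)$ with $(m,n,c)$-boundedly many elements of $L^-$ should force it to vanish; since $[M,M]$ is abelian, commutators with $\langle L^-\rangle$ reduce to commutators with strings of elements of $L^-$, so this yields $\gamma_K(L)=0$ for an $(m,n,c)$-bounded $K$. Hall's Theorem~\ref{t-hall}, applied with the nilpotent ideal $M+\gamma_{k+1}(L)$ and the fact that $L$ modulo its derived subring is nilpotent of bounded class, can be used to organize the bookkeeping once the collapse along $L^-$ is established. I expect this last collapse argument — making the two-sided squeeze on $(\,[M,M]\,)^-$ interact correctly with the parity-alternation under commutation by $L^-$ — to be the genuinely delicate part of the proof.
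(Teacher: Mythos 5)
There are two genuine gaps. The first is the step you yourself flag as ``the hard step'': this is precisely where the proof lives, and you leave it as an expectation rather than an argument; moreover the route you sketch (collapsing an element of $\gamma_{k+1}(L)$ by commutating it with boundedly many strings from $L^-$, using a ``two-sided squeeze'' on $[M,M]^-$) is not needed and would be hard to push through. The actual resolution is a short parity argument that upgrades Lemma~\ref{l-2.5} to nilpotency of $M$ itself: by Lemma~\ref{l-2.5} applied to the metabelian ideal $M=L^{(d-2)}$ one has $\gamma_g(M)\leqslant [M,M]^-$ for an $(m,n,c)$-bounded $g$; since $\gamma_g(M)$ is an ideal of $L$, for any odd component we get $[\gamma_g(M),L_{\rm odd}]\subseteq \gamma_g(M)\subseteq L^-$, while on the other hand $[\gamma_g(M),L_{\rm odd}]\subseteq[L^-,L_{\rm odd}]\subseteq L^+$; hence $[\gamma_g(M),L_{\rm odd}]=0$, and since $L=\langle L^-\rangle$ this gives $[\gamma_g(M),L]=0$, so $\gamma_{g+1}(M)=0$. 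Thus $M$ is nilpotent of bounded class, and Hall's Theorem~\ref{t-hall}, applied to the nilpotent ideal $M$ and the inductively bounded class of $L/[M,M]$, bounds the class of $L$. (Once this is seen, your separate application of Lemma~\ref{l-2.3} to $M$ in the inductive step is not needed; it is used only inside the proof of Lemma~\ref{l-2.5}.)

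The second gap is the derived length. Your induction on $d$ can only yield a bound depending on $d$, so to conclude you must bound $d$ in terms of $m,n,c$; asserting ``I may assume $L$ is soluble'' does not achieve this. Theorem~\ref{t3} gives a soluble ideal of bounded index and bounded derived length, not bounded derived length of $\langle L^-\rangle$ itself (and passing to that ideal destroys the hypothesis $L=\langle L^-\rangle$), while in the application to Theorem~\ref{t1} the associated Lie ring of $[G,\psi]$ is nilpotent but of a priori unbounded derived length, so ``automatic'' solubility does not help either. The missing ingredient is the preliminary step of the paper's proof: by Lemma~\ref{l-fp}(b) the lower central series of $L$ refines to a $\varphi$-invariant series of length at most $2m+1$ whose factors are either central or admit $\varphi$ fixed-point-freely; on the fixed-point-free factors $L_0$ vanishes and Theorem~\ref{t-shu} gives $(n,c)$-bounded derived length, whence $L$ is soluble of $(m,n,c)$-bounded derived length and the induction on $d$ legitimately produces an $(m,n,c)$-bounded class. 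Without this, even with the collapse argument supplied, your final bound depends on an unbounded parameter.
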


\begin{proof} As before we can assume that the ground ring contains a
primitive $2^n$-th root of 1 and $L$ is graded by analogues of
eigenspaces $L_i$ of $\varphi$. We can obviously assume that
$
L=\langle L^-\rangle =\langle [L,\psi ]\rangle
$.
Consider the lower central series of $L$. The
fixed points of $\varphi$ in its factors are images of the fixed points
in $L$ by Lemma~\ref{l-fp}(b). Therefore there are at most $m$
factors where $\varphi$ is not fixed-point-free. We obtain a series of
$\varphi$-invariant ideals of length at most $2m+1$ each factor of
which either  is central or admits $\varphi$ as a fixed-point-free
automorphism.  The fixed-point subrings of $\psi$ in these factors
are images of subrings of $C_L(\psi )$ by Lemma~\ref{l-fp}(b) and
therefore are nilpotent of class at most $c$. By
Theorem~\ref{t-shu} the factors with fixed-point-free action of $\varphi$
are soluble of $(n,c)$-bounded derived length.  As a result, $L$
is soluble of $(m,n,c)$-bounded derived length. Therefore it is
sufficient to prove by induction on the derived length $d$ of $L=\langle L^-\rangle$ that $L$ is nilpotent of $(d,m,n,c)$-bounded class. If $d=1$, there is nothing to prove, so let $d\geqslant 2$.
 Let $R=L^{(d-2)}$ be the penultimate (metabelian) term of the
derived series of $L$. By the induction hypothesis, $L/[R,R]$ is
nilpotent of $(d-1,m,n,c)$-bounded class. By  Lemma~\ref{l-2.5},
$\gamma _{g}(R)\leqslant [R,R]^-$ for an $(m,n,c)$-bounded number $g$. But
$\gamma _{g}(R)$ is an ideal of $L$, and therefore $[\gamma
_{g}(R), L]=0$, since $[[R, R]^-, L_{\rm odd}]\leqslant L^+$ and $L=\langle
L^-\rangle$. Therefore, in particular, $\gamma _{g+1}(R)=0$. It
remains to apply Hall's Theorem \ref{t-hall}, 
by which $L$ is
nilpotent of  $(d,m,n,c)$-bounded class, as required. \end{proof}

 We now complete the proof of Theorem~\ref{t1}.
Recall that we already have a reduction to the case of a finite
nilpotent group $G$ of odd order admitting an automorphism $\varphi$ of
order $2^n$ such that the fixed-point subgroup $C_G(\psi )$ of the involution $\psi=\varphi ^{2^{n-1}}$ is nilpotent of
class $c$. For $m=|C_G(\varphi )|$ being the number of fixed points of
$\varphi$, we need to prove that $G$ has a soluble subgroup of
$(m,n,c)$-bounded index that has $(n,c)$-bounded derived length.

Recall that  the associated Lie ring $L(D)$ of a group $D$  is defined on the direct sum of lower central factors $L(D)=\bigoplus_i \gamma _i(D)/\gamma _{i+1}(D)$. For $a\in g _{i} (D)$, $b\in \gamma _{j}(D)$, the Lie products  are defined by $[a+\gamma _{i+1} (D),\,b+\gamma _{j+1}(D)]=[a,b]+\gamma _{i+j+1}(D)$ via the group commutator $[a,b]$ on the right and extended to $L(D)$ by linearity. This definition is correct because of the inclusions $[\gamma _i(D),\gamma _j(D)]\leqslant \gamma _{i+j}(D)$.
These inclusions also imply that for any $k$ and any $a_i\in D$,
\begin{equation}\label{e-ass}
[a_1,\dots ,a_k]\gamma _{k+1}(D)=[\bar a_1,\dots \bar a_k],
\end{equation}
where the left-hand side is the image of the group commutator in $\gamma _{k}(D)/\gamma _{k+1}(D)$ and the right-hand side is the commutator in $L(D)$ of the images of $a_i$ in $D/\gamma _2(D)$. In particular, if $D$ is a nilpotent group, then $L(D)$ is a nilpotent Lie ring and its nilpotency class is exactly the same as that of $D$.

Consider the associated Lie  ring $L([G,\psi ])$ of $[G,\psi ]$.
By Lemma~\ref{l-fp}(b) the induced automorphism $\varphi$ denoted by
the same letter has the same number $|C_{L([G,\psi ])}(\varphi 
)|=|C_{[G,\psi ]}(\varphi )|\leqslant m$ of fixed points. Since
$C_{L([G,\psi ])}(\psi  )$ is the sum of the images of subgroups
of $C_{[G,\psi ]}(\psi  )$ by Lemma~\ref{l-fp}(b), it is easy to
see that $C_{L([G,\psi ])}(\psi  )$ is also nilpotent of class at
most $c$.  By Proposition~\ref{p-2.6} we obtain that $L([G,\psi
])$, and therefore also $[G,\psi ]$, is nilpotent of
$(m,n,c)$-bounded class $k$. However, our aim is a  subgroup of
bounded index with derived length `strongly' bounded, in terms  of
$n$ and $c$ only, independently of $m=|C_G(\varphi )|$. We will
achieve this goal by applying Theorem~\ref{t2} to find a subgroup
$H$ of $(m,n,c)$-bounded index such that $[H,\varphi ^{2^{n-1}}]$ is
nilpotent of $(n,c)$-bounded class.

We extend the ground ring by a primitive $2^n$-th root of unity
$\omega$ forming  $L=L([G,\psi ])\otimes _{{\Bbb Z} }{\Bbb Z} [\omega ]$. Then $L= L_0
\oplus L_1 \oplus \dots \oplus L_{2^n-1}$ is a $({\Bbb Z}/2^n{\Bbb Z})$-graded
Lie ring with grading components $L_i$ --- analogues of eigenspaces of
$\varphi$ --- satisfying $[L_s, L_t]\subseteq L_{s+t\,({\rm mod}\,2^n)}$. As
usual, the Lie ring $L([G,\psi ])$ is considered to be embedded in
$L$ as $L([G,\psi ])\otimes 1$. The Lie ring $L$ is nilpotent of
the same nilpotency class $k$. We also have $|C_L(\varphi )|\leqslant
m^{2^n}$ and $C_L(\psi )$ is nilpotent of class at most $c$.

By Proposition~\ref{p1} the Lie ring $L$ has a nilpotent ideal $B$
of $(n,c)$-bounded class $h$ such that $B\cap L^-$ has
$(m,n)$-bounded index in $L^-$. Since $[G,\psi]$ is generated by
elements $x$ such that $x^{\psi}=x^{-1}$, it follows that $L$ is
generated by elements $l$ such that $l^{\psi}=-l$, that is,
$L=\langle L^-\rangle=L^-+[L,L]$.  Then $M=B +[L,L]$ is an ideal
of $(m,n)$-bounded index in $L$. The nilpotency class of $M=B +[L,L]$ is
strictly smaller than the nilpotency class $k$ of $L$, as long as $k$  was higher than $h$. Indeed,
consider any commutator of weight $k$ in elements of $B\cup [L,L]$. If it involves at least one element of $[L,L]$, then it  clearly belongs to $\gamma _{k+1}(L)=0$; otherwise it belongs to $\gamma _{k}(B)$, which is trivial when $k>h$.

Consider $T=M\cap L([G,\psi ])$, which is an
ideal of the Lie ring $L([G,\psi ])$ containing $\gamma _2(L([G,\psi ]))$.
Taking the `full inverse image' of $T$  modulo  $\gamma _2([G,\psi ])$ we obtain a subgroup $G_1$ of $(m,n)$-bounded
index in  $[G,\psi ]$.  As long as $h<k$, the  nilpotency class of $G_1$ is strictly
smaller than the nilpotency class $k$ of $[G,\psi ]$. Indeed,
consider any commutator $[a_1,\dots ,a_k]$ of weight $k$ in elements  $a_i\in G_1$. Since $\gamma _{k+1}([G,\psi ])=1$, by formula \eqref{e-ass} we have
$$
[a_1,\dots ,a_k]=[\bar a_1,\dots ,\bar a_k],
$$
where $\bar a_i$ is the image of $a_i$ in $[G,\psi ]/\gamma _2([G,\psi ])$. By construction, $\bar a_i\in T$ and therefore the Lie ring commutator on the right is equal to $0$ if $k>h$, which also means that  $
[a_1,\dots ,a_k]=1$.

By the Bruno--Napolitani theorem \cite[Lemma~3]{brna} (see also Theorem~\ref{t-char}), there is also a characteristic subgroup of $[G,\psi ]$ that has  $(m,n)$-bounded index in $[G,\psi ]$ and is nilpotent of class at most $k-1$.  Changing notation we denote this
subgroup again by $G_1$, which is now normal in $G$ and $\varphi$-invariant. Then the product
$G_2=G_1C_G(\psi )$ is a $\varphi$-invariant subgroup of $G$ of
$(m,n)$-bounded index (the latter because $G=[G,\psi ]C_G(\psi )$ by Lemma~\ref{l-fp}(b)),  and the nilpotency class
of $[G_2, \psi]\leqslant G_1$ is strictly smaller than $k$. We can now apply
the same arguments to $G_2$ and so on, at each step obtaining a
$\varphi$-invariant subgroup $G_{2i}$ containing $C_G(\psi )$ and having
$(m,n)$-bounded index in $G_{2i-2}$ such that $[G_{2i},\psi ]$
has nilpotency class strictly smaller than that of $G_{2i-2}$ --- as
long as the latter remains greater than the $(n,c)$-bounded number
$h$ given by Propositon~\ref{p1}. The number of these steps is
$(m,n,c)$-bounded, since the nilpotency class of $[G,\psi ]$ is
$(m,n,c)$-bounded. As a result, we arrive at a subgroup $H$ of
$(m,n,c)$-bounded index in $G$ such that $[H,\psi ]$ is nilpotent
of $(n,c)$-bounded class at most $h$. Since $C_H(\psi )$ is nilpotent of class
at most $c$ by hypothesis, this subgroup $H$ is soluble of
$(n,c)$-bounded derived length. By Theorem~\ref{t-char} there is also a characteristic subgroup of $(m,n,c)$-bounded index in $G$ which has the same derived length as $H$. \end{proof}

\begin{remark}\label{r1}
The condition in the theorem that $C_G(\varphi ^{2^{n-1}})$ is nilpotent of class  $c$ can be weakened to requiring all Sylow subgroups of
$C_G(\varphi ^{2^{n-1}})$ to be nilpotent of class at most $c$. Indeed, that condition is not used in the reduction at the beginning of the section to the case $G=O_{2'}(G)$. After that, as we saw, it is sufficient to consider the factors $Q_i$ of the Fitting series of $O_{2'}(G)$. If all  Sylow subgroups of
$C_G(\varphi ^{2^{n-1}})$ are nilpotent of class at most $c$, then $C_{Q_i}(\varphi ^{2^{n-1}})$ is nilpotent of class at most $c$ for every $i$ and we find ourselves under the hypotheses of Theorem~\ref{t1}.

Similarly,  in Corollary~\ref{c1} the condition that $C_G(g ^{2^{n-1}})$ is nilpotent of class  $c$ can be weakened to requiring all nilpotent subgroups of
$C_G(g ^{2^{n-1}})$ to be nilpotent of class at most $c$, because when applying the inverse limit argument to a system of finite subgroups containing $g$, we would be able to use the aforementioned stronger version of Theorem~\ref{t1}.
\end{remark}

\section*{Acknowledgments} This work was supported by CNPq-Brazil. The first author thanks  CNPq-Brazil
and the University of Brasilia for support and hospitality that he
enjoyed during his visits to Brasilia. The second
author was supported  by the Russian Foundation for Basic
Research, project no. 13-01-00505. 

The authors thank the referee for careful reading of the paper and several helpful comments. 

\end{document}